\documentclass[12pt,english]{article}
\usepackage[T1]{fontenc}
\usepackage[utf8]{inputenc}
\usepackage{geometry}
\usepackage{babel}
\usepackage{mathtools}
\usepackage{amsthm}
\usepackage{amssymb}
\usepackage{scalerel}
\usepackage[unicode=true,pdfusetitle,
 bookmarks=true,bookmarksnumbered=false,bookmarksopen=false,
 breaklinks=false,pdfborder={0 0 1},backref=false,hidelinks]
 {hyperref}

\makeatletter
\numberwithin{equation}{section}
\numberwithin{figure}{section}
\theoremstyle{plain}
\newtheorem{thm}{\protect\theoremname}
\theoremstyle{definition}

\theoremstyle{remark}

\theoremstyle{plain}
\newtheorem{lem}[thm]{\protect\lemmaname}
\theoremstyle{plain}
\newtheorem{prop}[thm]{\protect\propositionname}

\date{}
\usepackage{tikz-cd}
\IfFileExists{libertine.sty}{\usepackage[tt=false]{libertine}}{}
\usepackage[parfill]{parskip}
\def\thm@space@setup{%
  \thm@preskip=10pt plus 2pt minus 2pt
  \thm@postskip=10pt plus 2pt minus 2pt
}
\usepackage{enumitem}
\setlist[itemize]{noitemsep,topsep=5pt}

\usepackage{titlesec}
\usepackage{titletoc}
\titleformat{\section}{\large\bfseries\filleft}{\thesection}{1em}{}[{\titlerule[0.8pt]}]
\titleformat{\part}[block]
  {\normalfont\Large\bfseries\raggedright}
  {}{0pt}{}
\titlespacing*{\part}{0pt}{3.5ex plus 1ex minus .2ex}{1.5ex}

\titlecontents{part}
  [0pt]
  {\addvspace{.9em}\bfseries}
  {}
  {}
  {}
\titlecontents{section}
  [1.5em]
  {\setlength{\parfillskip}{0pt plus 1fil}\contentsmargin{0pt}}
  {\contentslabel{1.75em}}
  {}
  {\titlerule*[.6pc]{.}\hspace*{1.2em}\makebox[0pt][r]{\thecontentspage}}

\newenvironment{ack}{\textit{Acknowledgements.}}{}
\renewcommand\labelenumi{(\roman{enumi})}
\renewcommand\theenumi\labelenumi

\makeatletter
\DeclareRobustCommand{\bigboxplus@}{%
  \mathop{\vphantom{\sum@}\scaleobj{0.9}{\scalerel*{\boxplus}{\sum}}}%
}
\newcommand{\bigboxplus}{\DOTSB\bigboxplus@\slimits@}
\DeclareRobustCommand{\bigboxtimes@}{%
  \mathop{\vphantom{\sum@}\scaleobj{0.9}{\scalerel*{\boxtimes}{\sum}}}%
}
\newcommand{\bigboxtimes}{\DOTSB\bigboxtimes@\slimits@}
\makeatother

\DeclareMathOperator{\Supp}{Supp}
\DeclareMathOperator{\codim}{codim}
\DeclareMathOperator{\ch}{char}

\DeclareMathOperator{\divisor}{div}

\DeclareMathOperator{\Sym}{Sym}

\DeclareMathOperator{\mult}{mult}

\DeclareMathOperator{\res}{res}

\DeclareMathOperator{\cdeg}{cdeg}

\DeclareMathOperator{\Sing}{Sing}

\DeclareMathOperator{\ord}{ord}

\makeatother

\providecommand{\definitionname}{Definition}
\providecommand{\lemmaname}{Lemma}
\providecommand{\propositionname}{Proposition}
\providecommand{\theoremname}{Theorem}

\global\long\def\C{\mathbb{C}}%
\global\long\def\F{\mathbb{F}}%
\global\long\def\P{\mathbb{P}}%
\global\long\def\O{\mathcal{O}}%
\global\long\def\Z{\mathbb{Z}}%
\global\long\def\ep{\varepsilon}%

\title{The asymptotic in Waring's problem over function fields beyond twice the degree}
\author{Matthew Hase-Liu}

\begin{document}

\maketitle

\begin{abstract}
We prove the expected asymptotic in Waring's problem over $\F_q[T]$, with a power-saving error, whenever $n>2d$, the characteristic is greater than $(d-1)^2$, and $q$ satisfies an explicit lower bound. This range is sharp in general for the expected asymptotic uniformly in the target polynomial.

Our main new input is an aggregate minor arc estimate: we count functionals according to the codimension of their associated singular loci and use intersection theory to bound the degrees of the resulting parameter spaces. In particular, if $n\ge (2+\ep)d$, the required lower bound on $q$ is polynomial in $d$ of degree $2+4/\ep$.
\end{abstract}
\setcounter{tocdepth}{1}
\tableofcontents{}

\section{Introduction}

Over the integers, the expected asymptotic in Waring's problem was first established by Hardy and Littlewood when the number of variables is sufficiently large. The best known bound for the number of variables required is quadratic in $d$ \cite{BourgainVMVT,WooleyAsymptotic,WooleyNested}. The problem of reducing the number of variables is also closely tied to the main conjecture in Vinogradov's mean value theorem: Wooley proved the cubic case using efficient congruencing \cite{WooleyCubic}, Bourgain--Demeter--Guth proved the cases of degree greater than three using decoupling \cite{BDGVMVT}, and Wooley subsequently gave a proof in all degrees using nested efficient congruencing \cite{WooleyNested}.

The function field Waring problem was studied earlier by Kubota
\cite{KubotaWaring} and Liu and Wooley \cite{LiuWooleyWaring}.
For the asymptotic problem over $\F_q[T]$, Yamagishi obtained
bounds that are quadratic in the degree, as well as linear bounds in
certain special cases \cite{YamagishiWaring}. For cubes, Glas and
Hochfilzer established an asymptotic formula with a power-saving error
term in seven variables whenever $\ch(\F_q)\ne3$ \cite{GlasHochfilzer}. Developing an idea of Pugin \cite{PuginThesis}, Sawin instead treated the minor arc sums as complete exponential sums over $\F_q$ and applied estimates in
terms of their singular loci \cite{SawinWaring}. This reduced the number
of variables from quadratic to linear in the degree; for sufficiently large characteristic and $q$, Sawin's result requires roughly $4d$ variables.

Write $p=\ch(\F_q)$.  
Let $P_j=H^0(\P^1,\O(j\infty))$, which we identify with the
polynomials in $\F_q[T]$ of degree at most $j$.  For $f\in P_{de}$,
write
\[N_{e,n}(f)=\#\{(a_1,\ldots,a_n)\in P_e^n\colon a_1^d+\cdots+a_n^d=f\}.\]
The asymptotic regime is $e\to\infty$ with $q$, $d$, and $n$ fixed.
We use the local factors $\ell_\infty(f)$ and $\ell_\pi(f)$ defined in
\cite[Problem 1.1]{SawinWaring}, writing $\ell_{\infty,e}(f)$ for the
factor at infinity to record its dependence on the degree bound.

The main result of this paper establishes the following expected fixed field estimate.

\begin{thm}\label{thm:main}
Fix integers $d\ge3$ and $n>2d$. Suppose
\[
 p>(d-1)^2 \text{ and } q > (d-1)^{\frac{2n}{n-2d}}\left(\frac{12}{5}d\cdot100^d\right)^{\frac{4}{n-2d}}.
\]
Then there is $\theta=\theta(q,d,n)>0$ such that, uniformly for
$e\ge1$ and $f\in P_{de}$,
\begin{equation}\label{eq:main-asymptotic}
 N_{e,n}(f)=q^{e(n-d)+n-1}\ell_{\infty,e}(f)
 \prod_{\pi\textnormal{ monic irreducible}}\ell_\pi(f)
 +O_{q,d,n}\left(q^{e(n-d)+n-1-\theta e}\right).
\end{equation}
\end{thm}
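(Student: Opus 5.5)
We follow the circle method over $\F_q(T)$ in the form used by Sawin \cite{SawinWaring}. Write $N_{e,n}(f)$ as an integral over $\mathbb{T}=\F_q((1/T))/\F_q[T]$ of $S(\alpha)^n\psi(-\alpha f)$, where $S(\alpha)=\sum_{a\in P_e}\psi(\alpha a^d)$, and split $\mathbb{T}$ into major arcs (neighbourhoods of $g/r$ with $\deg r$ small relative to $e$) and minor arcs. On the major arcs we use the standard analysis: approximate $S$ by $q^{e+1}r^{-1}S(g/r)\cdot(\text{integral at }\infty)$, and sum over $r$. This produces $q^{e(n-d)+n-1}\ell_{\infty,e}(f)\prod_\pi\ell_\pi(f)$ with a power-saving error. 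The ingredients are the complete-sum bound $|S(g/r)|\ll |r|^{1-1/d+\epsilon}$ and absolute convergence of the singular series for $n>2d$; this part is routine and follows \cite{SawinWaring}. All the difficulty is on the minor arcs, where we need $\int_{\mathfrak m}|S(\alpha)|^n\ll q^{e(n-d)+n-1-\theta e}$.

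For the minor arcs we adopt Pugin's and Sawin's viewpoint. Truncating $\alpha$ turns $\alpha\mapsto \psi(\alpha a^d)$ into a linear functional $\lambda$ on $P_{de}$. Then $S(\alpha)$ becomes a complete exponential sum $\sum_{a\in P_e}\psi(\lambda(a^d))$ over the $\F_q$-vector space $P_e\cong\F_q^{e+1}$ of the degree-$d$ form $F_\lambda(a)=\lambda(a^d)$. Deligne/Katz-type bounds give
\[|S|\le C_d\, q^{(e+1+\sigma(\lambda))/2},\]
where $\sigma(\lambda)$ is the dimension of the singular locus of $F_\lambda$ and $C_d$ is a Betti-number constant. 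The hypothesis $p>(d-1)^2$ is what we use here, both to keep the projective hypersurface tame and to bound $C_d$ by roughly $(d-1)^{e+1}$-type quantities; this is the source of the $(d-1)^{2n/(n-2d)}$ factor. Rather than bounding $\sup|S|$ pointwise, as Sawin did (which costs $4d$ variables), we bound the aggregate. Let $V_s$ be the set of $\lambda$ with singular locus of dimension at least $s$. Then the minor arc integral is at most
\[q^{-de-1}\sum_s \#V_s(\F_q)\cdot \bigl(C_dq^{(e+1+s)/2}\bigr)^n.\]
It therefore suffices to show $\#V_s(\F_q)\ll D_s\, q^{de+1-c\,s}$ with $c$ close to $d$, in the sense that the codimension of $V_s$ is roughly $ds/2$ or better, for minor-arc $\lambda$. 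With that bound each term is $q^{e(n-d)+n-1}\cdot q^{s(n/2-c)}\cdots$. For $n>2d$ one needs exactly $c\cdot(\ldots)$ to beat $ns/2$ minus the main-term exponent, and the saving $q^{-\theta e}$ comes from the minor-arc condition forcing $s\ge \delta e$.

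The key geometric step, and the expected main obstacle, is the codimension and degree bound for $V_s$. We realise $V_s$ as the image of an incidence variety $\{(\lambda,W): W\subset \Sing F_\lambda,\ \dim W=s\}$, or more simply use $s$-tuples of points / a linear-space parametrisation. We bound its dimension by showing that $\lambda$ vanishing against $a^{d-1}b$ for all $a\in W$ and $b\in P_e$ imposes about $\dim\langle a^{d-1}b\rangle \ge$ (roughly $ds$) independent conditions on $\lambda$, unless $\lambda$ lies near a rational point, which is the major-arc case. Here we would use that products of polynomials span large spaces (a Hilbert-function / degree argument in $\F_q[T]$). The degree of $V_s$ is bounded via Bézout/refined intersection theory by something like $(\tfrac{12}{5}d\cdot 100^d)^{\,\cdot}$. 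Then a Lang--Weil / Schwartz--Zippel-type count gives $\#V_s(\F_q)\le \deg(V_s)\,q^{\dim V_s}$. Finally we verify that the $q$-hypothesis in Theorem \ref{thm:main} makes $(d-1)^{n}\deg(V_s)\,q^{-(n-2d)s/2}$-type terms geometrically decaying in $s$. This bookkeeping gives exactly the stated threshold and yields $\theta>0$.
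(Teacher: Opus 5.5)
Your overall strategy (layer the minor arcs by the size of the singular locus, bound each layer by (degree)$\times q^{\dim}$, sum a geometric series) is the paper's strategy. The gap is that the step carrying all the content is not supplied, and the mechanism you suggest would not supply it.

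What is needed is that functionals whose singular locus has codimension at most $h$ fill out only about $dh$ dimensions. That is codimension about $d$ per dimension of singular locus, not $d/2$. Also, with $s=\dim\Sing$ the minor-arc condition bounds $s$ from \emph{above} ($h=e+1-s\ge e/(d-1)$); it does not give $s\ge\delta e$.

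Your proposed mechanism fails on several counts. An incidence $\{(\lambda,W)\}$ over $s$-dimensional linear $W$ carries $\asymp s(e+1-s)$ Grassmannian parameters, far more than this budget. The large components of $\Sing_\alpha$ need not be linear. And ``products $a^{d-1}b$ span a large space'' gives no reason for exactly $d$ conditions per dimension.

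The missing idea is Sawin's residue description:
\begin{itemize}
\item For a minimal support $Z$ of length $m\le de/2+1$, $a\in\Sing_\alpha$ iff some $c\in P_{m-e-2}$ satisfies $c|_Z=\widetilde\alpha a^{d-1}|_Z$.
\item Tangent vectors satisfy $\dot c/c=(d-1)\dot a/a$. Hence a component $C$ has $\dim C\le 1+\#\Supp U+b(\mu)$, where $U$ collects the common zeros off $Z$ with order ratio $1:(d-1)$.
\item Writing $a=ug$, $c=u^{d-1}v$ removes $dR$ from the budget $\deg a+\deg c=m-2$.
\item The hypothesis $p>(d-1)^2$ is used precisely here: it forces every point counted by $b(\mu)$ to cost at least $d$, so $L(\mu)\ge d\,b(\mu)$. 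This gives a parameter count $\le de-dR-L(\mu)\le dh$.
\end{itemize}
You instead place $p>(d-1)^2$ in the exponential-sum estimate, where the paper needs only $p>d$ and $q>(d-1)^2$.

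Even granting such a count, your constants cannot reach the stated threshold.
\begin{itemize}
\item \textbf{The exponential-sum constant.} A Betti-number constant of size $(d-1)^{e+1}$, independent of $h$, costs $(d-1)^{n(e+1)}$. This must be beaten by $q^{(n/2-d)h}$ with $h$ as small as $e/(d-1)$, which forces roughly $q>(d-1)^{2n(d-1)/(n-2d)}$. The paper instead uses the codimension-sensitive bound \eqref{eq:aggregate-needed}, $|S(\alpha)|\le(d-1)^{h}q^{e+1-h/2}/(1-(d-1)/\sqrt q)$, so that consecutive layers differ by the ratio $(d-1)^nq^{d-n/2}$.
\item \textbf{The degree factor.} A factor $A_d^e$ applied uniformly to all minor arcs would require $A_d\bigl((d-1)^nq^{d-n/2}\bigr)^{1/(d-1)}<1$. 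This is strictly stronger than the hypothesis once $d\ge4$.
\end{itemize}
The paper avoids the second problem by splitting off the functionals for which $c$ vanishes on a largest component. That component is then $P_e(-D_d(Z))$, and $h=d_d(Z)$ can be close to $e/(d-1)$. But for $2m<de+2$ the minimal support is unique, so these functionals are counted exactly through $\zeta_{\P^1}(q^{d-1}T)/\zeta_{\P^1}(T)$. This gives $O_q(q^{dh})$ with no exponential loss. All remaining minor arcs are shown to have $h>e/2$, where $A_d^e$ is absorbed by $\bigl((d-1)^nq^{d-n/2}\bigr)^{e/2}$.

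Finally, you assert $\cdeg\le A_d^e$ rather than derive it. Deriving it requires:
\begin{itemize}
\item explicit projective families: projective bundles over products of $\Sym^{\mu(i,j,\nu)}\P^1$, cut out by a wedge section;
\item a B\'ezout bound for zero loci of vector-bundle sections;
\item a degree bound for images;
\item a generating-function sum over the multiplicity data $\mu$.
\end{itemize}
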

The following proposition shows that the range $n>2d$ is sharp.
\begin{prop}\label{prop:sharpness}
    Let $d\ge 3$, $p>d$, and $\F_q$ contain an element $\zeta$ with $\zeta^d=-1$. For fixed $q$, as $e\to \infty$, when $n=2d$, the difference between $N_{e,n}(0)$ and the main term in \eqref{eq:main-asymptotic} is not $o(q^{e(n-d)+n-1}).$ 

    Also, for $n=2d-1$, the corresponding error is likewise not $o(q^{e(n-d)+n-1})$ uniformly in $f$.
\end{prop}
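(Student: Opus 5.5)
The plan is to exhibit, for $n=2d$ and $f=0$, a family of "diagonal" solutions that by itself has size of order $q^{e(n-d)+n-1}=q^{ed+2d-1}$, and then to check that the main term cannot absorb it. Since $\zeta^d=-1$, for any $a_1,\dots,a_d\in P_e$ the tuple $(a_1,\dots,a_d,\zeta a_1,\dots,\zeta a_d)$ satisfies $\sum a_i^d+\sum(\zeta a_i)^d=0$. More generally, for any permutation $\sigma$ of $\{1,\dots,2d\}$ pairing the coordinates into $d$ pairs $(i,j)$ with $a_j=\zeta^{k}a_i$, where $\zeta^k$ ranges over the $d$-th roots of $-1$ in $\F_q$, we get solutions. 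Already the single pairing gives at least $q^{d(e+1)}=q^{ed+d}$ solutions. That exponent is too small compared with $q^{ed+2d-1}$. So instead I would count solutions of $a_1^d+\dots+a_d^d=g$ and $a_{d+1}^d+\dots+a_{2d}^d=-g$ summed over $g$, i.e. $N_{e,2d}(0)=\sum_g N_{e,d}(g)N_{e,d}(-g)$. By Cauchy--Schwarz (after the substitution $a\mapsto \zeta a$, which identifies $N_{e,d}(-g)$ with $N_{e,d}(g)$),
\[
N_{e,2d}(0)=\sum_{g\in P_{de}}N_{e,d}(g)^2\ \ge\ \frac{\bigl(\sum_g N_{e,d}(g)\bigr)^2}{\#\{g\}}=\frac{q^{2d(e+1)}}{\#\{g: N_{e,d}(g)>0\}}.
\]

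The key step is therefore to bound the image of $(a_1,\dots,a_d)\mapsto\sum a_i^d$. Its image lies in $P_{de}$, which has $q^{de+1}$ elements, but the image is much thinner. Because $p>d$ forces the leading coefficients to interact only through $\sum c_i^d$, the image has size at most $q^{de+1}$ trivially. This gives $N_{e,2d}(0)\ge q^{2de+2d-de-1}=q^{ed+2d-1}$, which is exactly the order of the main term, so it is not yet enough. To beat it by a constant factor, I would use that $\sum_g N_{e,d}(g)^2$ exceeds the Cauchy--Schwarz bound unless $N_{e,d}$ is nearly constant. Since $d$ variables of degree $e$ cannot represent a positive proportion of $P_{de}$ equidistributedly (the map has fibers at least of size $d!$ on the generic locus, coming from permuting the variables and scaling by $d$-th roots of unity), I get $\sum_g N_{e,d}(g)^2\ge c\cdot d!\,q^{ed+2d-1}$ for some $c$ bounded below.

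The remaining step is to compare with the main term. The singular series and $\ell_{\infty,e}(0)$ are bounded by constants depending on $q,d$, by Sawin's local factor estimates. Once $d!$ (or the symmetry-group size $\#\mu_{2d}^{\,d}\cdot d!$ from choosing roots of $-1$ in each pair) exceeds that bounded product, the difference is $\gg q^{e(n-d)+n-1}$. The main obstacle I expect is making this constant comparison honest: I have to show the main term's constant is strictly smaller than the diagonal contribution's constant. I would try to do that by computing that the main term corresponds to the "generic" count, while the diagonal solutions form an extra component of the same dimension $ed+2d-1$ not accounted for by local densities.

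For $n=2d-1$, I would take $f=\zeta'$-adjusted targets $f=c\,T^{de}$ with $c$ a fixed leading coefficient, or $f=0$. The same pairing argument, with one variable left over set to $0$ (or with $a_{2d-1}$ fixed), gives $\gg q^{e(d-1)+\dots}$ solutions on an explicit family. That family exceeds $q^{e(d-1)+2d-2-\theta e}$, while the main term at such $f$ is either of the same order with a different constant or vanishes at the infinite place. I would choose $f$ where $\ell_{\infty,e}(f)$ is small, for instance with leading coefficient outside the image of $\sum c_i^d$ restricted appropriately, so that the failure of uniformity in $f$ is transparent.
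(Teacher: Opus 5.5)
There is a genuine gap. Every inequality you actually prove is a lower bound for $N_{e,2d}(0)$ alone. The statement, however, is about $N_{e,2d}(0)$ minus the main term $q^{de+2d-1}\ell_{\infty,e}(0)\prod_\pi\ell_\pi(0)$.

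\begin{itemize}
\item Your Cauchy--Schwarz bound $\sum_g N_{e,d}(g)^2\ge q^{de+2d-1}$ has constant $1$, and that can never beat the main term. Since $\zeta^d=-1$, the local sums $S_Z(\beta)$ are real. So at $f=0$ the singular series is a sum of nonnegative terms $S_Z(\beta)^{2d}$, including the term $1$ from $Z=\emptyset$. Hence the main-term constant is at least $1$.
\item The ``fibers of size $d!$'' step does not rescue this. It gives $\sum_g N_{e,d}(g)^2\ge d!\sum_g N_{e,d}(g)=d!\,q^{d(e+1)}$, which is $d!\,q^{1-d}$ times the scale, not $d!$ times it. Average fibers have size about $q^{d-1}$, so permutation symmetry is not the relevant effect.
\item Your closing heuristic, that the diagonal solutions are an extra piece not seen by the local densities, is exactly what needs proof.
\end{itemize}

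You also discarded the diagonal family too quickly. With $q$ fixed, $q^{d(e+1)}=q^{1-d}\cdot q^{e(n-d)+n-1}$ is of full order, and $q^{1-d}$ is precisely the excess the paper establishes.

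What is missing is a way to separate the main term from the diagonal contribution; the paper does this on the Fourier side.
\begin{itemize}
\item Write $N_{e,2d}(0)=q^{-de-1}\sum_\alpha S(\alpha)^{2d}$, a sum of nonnegative terms since $S(\alpha)$ is real.
\item For fixed $B$ and large $e$, the $O_{q,B}(1)$ functionals with $\deg\alpha\le B$ contribute exactly $q^{de+2d-1}C^{\le B}_{2d}(0)$. This tends to the main term as $B\to\infty$.
\item The diagonal solutions enter through the $L^2$ identity $q^{-de-1}\sum_\alpha|S(\alpha)|^2=\#\{(a,b)\in P_e^2\colon a^d=b^d\}\ge q^{e+1}$. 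The trivial bound shows the low-degree $\alpha$ carry only $o(q^{e+1})$ of this mass.
\item H\"older over at most $q^{de+1}$ functionals then gives $q^{-de-1}\sum_{\deg\alpha>B}S(\alpha)^{2d}\ge(1-o(1))q^{d(e+1)}$. Hence $\liminf_{e}\bigl(q^{-(de+2d-1)}N_{e,2d}(0)-C_{2d}(0)\bigr)\ge q^{1-d}$.
\end{itemize}

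Your $n=2d-1$ sketch has the same defect. An explicit family of size comparable to $q^{1-d}$ times the scale proves nothing unless you show it exceeds what the main term predicts. Choosing $f$ with small $\ell_{\infty,e}(f)$ likewise requires showing $N_{e,2d-1}(f)$ is not correspondingly small, which you do not address.

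The paper instead averages over $f=-a^d$ with $a\in P_e$. It uses $N_{e,2d}(0)=\sum_a N_{e,2d-1}(-a^d)$ and $q^{-e-1}\sum_aC_{2d-1,e}(-a^d)=C_{2d}(0)+o(1)$. The latter comes from surjectivity of restriction to $Z$ with $\deg Z\le e+1$, plus the uniform tail bound. So an asymptotic in $2d-1$ variables uniform in $f$ would give one in $2d$ variables at $f=0$, contradicting the first part.
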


The proof is given at the very end in Section \ref{sec:puttingeverythingtogether}. This is even easier to see for $n\le 2d-2$: Put $r=\lfloor n/2\rfloor$. Using the notation of the proposition, for arbitrary $b_1,\ldots,b_r\in P_e$, set $a_{2i-1}=b_i$ and $a_{2i}=\zeta b_i$ for $1\le i\le r$, and, when $n$ is odd, set the final coordinate equal to zero. This gives $q^{r(e+1)}$ solutions with $f=0$, which is larger than $q^{e(n-d)+n-1}$.

This is a direct continuation of Sawin's analysis in
\cite{SawinWaring}.  We use his circle method decomposition and
major arc calculation, his description of functionals by finite
subschemes of $\P^1$, his residue description of the singular locus,
and his tangent space calculation for pairs of zero divisors.  The new
point is that the minor arcs are counted according to the
codimension of their singular loci rather than bounded one at a time.
The complete sum estimate used here is proved in \cite[Theorem 7]{HaseLiuQuadraticBirch}. In the arbitrary hypersurface setting of that paper, multiplication rank and a weighted degeneration of the Jacobian equations give pointwise codimension bounds. Here, the
diagonal form permits a different
refinement: Sawin's description of the singular loci lets us count the
functionals collectively according to codimension, while intersection
theory controls, uniformly in $e$, the degrees of the parameter spaces
occurring in this count.

Here is the numerical reason for doing so. Let $S(\alpha)$ be the complete exponential sum indexed by $\alpha\in P_{de}^\vee$. If $h$ is the codimension of the relevant singular locus, then 
\[|S(\alpha)|\ll_{q,d}(d-1)^hq^{e+1-h/2}.\]We prove that every minor arc has $h\ge e/(d-1)$.  We isolate a class containing every minor arc with $h\le e/2$ and count it directly: the number of functionals in this class with codimension at most $h$ is $O_{q,d}(q^{dh})$. Every remaining minor arc has $h>e/2$, and the number of these with codimension at most $h$ is \begin{equation}\label{eq:intro-count}
 O_d\left(A_d^e q^{dh+1}\right) \text{ with }A_d=\frac{12}{5}d\cdot100^d.
\end{equation}
Thus the ratio between consecutive codimension layers is $(d-1)^nq^{d-n/2}$.  Since the second class begins above $e/2$, the normalized contribution is bounded by \[A_d^e\sum_{h>e/2}((d-1)^nq^{d-n/2})^h\ll A_d^e((d-1)^nq^{d-n/2})^{e/2}=\left(A_d\sqrt{(d-1)^nq^{d-n/2})}\right)^e,\] so it is enough to have
 \[q>(d-1)^{\frac{2n}{n-2d}}A_d^{\frac4{n-2d}}.\]
In particular, if $n\ge(2+\ep)d$ with $\ep>0$, it is enough to take
\[
 q>(d-1)^{2+4/\ep}
 \left(\frac{12}{5}d\cdot100^d\right)^{4/(\ep d)}.
\]
For fixed $\ep>0$, it is therefore enough that $q$ grow polynomially
in $d$: more precisely,
\[
 q>C_\ep d^{2+4/\ep},
\]
where $C_\ep$ depends only on $\ep$.

The degree estimate in \eqref{eq:intro-count} is essential because $q$
is fixed while $e$ tends to infinity. A dimension bound alone does not
give a sufficiently uniform point count: the number and degrees of the
relevant components may grow with $e$. We therefore place the relevant
functionals in closed subsets of $\P(P_{de}^{\vee})$ for which the sum
of the degrees of the irreducible components is $O_d(A_d^e)$. This is
the source of the factor $A_d^e$ in
\eqref{eq:intro-count}; the required degree bound is proved using
intersection theory.

Section \ref{sec:Sawinreduction} recalls Sawin's circle method decomposition and reduces the
theorem to aggregate counting estimates. Sections \ref{sec:supportsandauxiliary} and \ref{sec:minor_arcs} derive the
needed codimension and dimension bounds from the auxiliary congruence
and its tangent spaces. Section \ref{sec:effectiveconstants} makes the count effective by bounding
the degrees of the relevant parameter spaces through intersection
theory, and Section \ref{sec:puttingeverythingtogether} completes the proof.

\begin{ack}
    I'd like to thank Will Sawin and Shuntaro Yamagishi for their interest and helpful comments. Notably, Will also independently arrived at a
    similar strategy: grouping minor arcs by the codimension of their
    associated singular loci and separating common zeros of $a$ and $c$
    whose multiplicities are in the ratio $1\colon d-1$.
    
    ChatGPT 5.6 Sol and 6 Astra were used to proofread this article.
    They also suggested and worked out the computations in Section
    \ref{sec:effectiveconstants} and the proof of Proposition
    \ref{prop:sharpness}. The aggregate minor arc approach was developed
    by the author before this AI-assisted revision. All AI-assisted
    suggestions were independently checked by the author.
\end{ack}
\section{Sawin's circle method reduction}\label{sec:Sawinreduction}
Throughout this section, we assume the hypotheses of Theorem \ref{thm:main}. In particular, $p>d$ and $q>(d-1)^2$.

Fix a non-trivial additive character $\psi\colon\F_q\to\C^\times$.  For
$\alpha\in P_{de}^{\vee}$, put
\[S(\alpha)=\sum_{a\in P_e}\psi(\alpha(a^d)).
\]
By orthogonality, we have
\[N_{e,n}(f)=q^{-(de+1)}
 \sum_{\alpha\in P_{de}^{\vee}}
 \psi(-\alpha(f))S(\alpha)^n.\]
Following \cite[Section 3]{SawinWaring} and \cite[Section 4]{haseliu2024higher}, say that a finite closed
subscheme $Z\subset\P^1$ supports $\alpha$ if $\alpha$ factors through
restriction to $Z$, and let $\deg(\alpha)$ be the minimum possible
length of such a scheme.  We define the \textit{major arcs} on $\P^1$ to be the
functionals with $\deg(\alpha)\le e+1$.  The remaining functionals
are the \textit{minor arcs}.

For a non-zero functional $\alpha$, let
\[
 \Sing_\alpha=\left\{a\in P_e\colon \alpha(a^{d-1}b)=0\text{ for every }b\in P_e\right\}.\] This is the affine singular locus of
$a\mapsto\alpha(a^d)$. Also, let \[h(\alpha)=\codim_{P_e}\Sing_\alpha.\]
Since $p>d,$ polarization and the surjection $\Sym^d P_e\to P_{de}$ show that the pure powers $a^d$ span $P_{de}$. Thus $a\mapsto \alpha(a^d)$ is a non-zero homogeneous polynomial of degree $d$. Applying \cite[Theorem 7]{HaseLiuQuadraticBirch} with $N=e+1$ and $P(a)=\alpha(a^d)$ gives
\begin{equation}\label{eq:aggregate-needed}
 |S(\alpha)|\le \frac{(d-1)^{h(\alpha)}}{1-(d-1)/\sqrt q}q^{e+1-h(\alpha)/2}.
\end{equation}
Indeed, differentiating $P$ in the direction $b$ shows scheme-theoretically that its singular locus is $\Sing_\alpha$. Note also that $h(\alpha)\ge 1$. 
The major arcs and the excess singular series tail contribution require no new
argument. The specialization of
\cite[Lemmas 3.8--3.10]{SawinWaring} to $\P^1$ gives, for every
$0<\delta<(n-d-1)/d$,
\[\begin{split}
 N_{e,n}(f)={}&q^{e(n-d)+n-1}\ell_{\infty,e}(f)\prod_\pi\ell_\pi(f)\\
 &+q^{-(de+1)}
 \sum_{\substack{\alpha\in P_{de}^{\vee}\\\deg(\alpha)>e+1}}\psi(-\alpha(f))S(\alpha)^n\\
 &+O_{n,d,\delta}\left(q^{e(n-d)+n-1}q^{-\delta(e+2)}\right).
\end{split}\]
The only term that has to be analyzed is the middle (minor arc) term.

The following reduction isolates the required geometric input.

\begin{prop}\label{prop:analytic-reduction}
Suppose that the minor arc functionals are divided into two collections with the following properties. In the first collection, every functional has $h(\alpha)\ge e/(d-1)$ and \[\#\{\alpha\colon h(\alpha)\le h\}\ll_{q,d}q^{dh}.\] In the second collection, every functional has $h(\alpha)>e/2$, and, for some $A_d>1$, \[\#\{\alpha\colon h(\alpha)\le h\}\ll_d A_d^e q^{dh+1}.\] Then, if \[q>(d-1)^{\frac{2n}{n-2d}}A_d^{\frac4{n-2d}},\] then there is $\theta=\theta(q,d,n)>0$ such that
\begin{equation}\label{eq:minor-arc-bound}
 q^{-(de+1)}
 \left|\sum_{\substack{\alpha\in P_{de}^{\vee}\\
                   \deg(\alpha)>e+1}}
 \psi(-\alpha(f))S(\alpha)^n\right|
 \ll_{q,d,n,\theta}
 q^{e(n-d)+n-1}q^{-\theta e}.
\end{equation}
\end{prop}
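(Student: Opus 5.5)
We bound the minor arc sum trivially in absolute value and insert the pointwise bound \eqref{eq:aggregate-needed}. The hypothesis $q>(d-1)^2$ makes $1-(d-1)/\sqrt q$ a positive constant depending only on $q,d$. For each minor arc $\alpha$ this gives
\[
|S(\alpha)|^n\ll_{q,d,n}(d-1)^{nh(\alpha)}q^{n(e+1)-nh(\alpha)/2}.
\]
After the normalization $q^{-(de+1)}$, the left side of \eqref{eq:minor-arc-bound} is therefore
\[
\ll q^{e(n-d)+n-1}\sum_{\alpha\text{ minor}}\beta^{h(\alpha)},\qquad \beta:=(d-1)^nq^{-n/2}.
\]
It therefore suffices to show that $\sum_\alpha \beta^{h(\alpha)}\ll q^{-\theta e}$ over each of the two collections. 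We organize each sum by the value of $h=h(\alpha)$, which is an integer in $[1,e+1]$. Write $M(h)$ for the number of $\alpha$ in the collection with $h(\alpha)\le h$, and $m(h)=M(h)-M(h-1)$ for the number with $h(\alpha)=h$. Since $\beta<1$ (because $q>(d-1)^2$), we can use the crude bound $m(h)\le M(h)$ directly, with no summation by parts. The sum is then at most $\sum_{h\ge h_0}M(h)\beta^h$, where $h_0$ is the lower bound on $h$ in that collection.

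For the first collection, $h_0=\lceil e/(d-1)\rceil$ and $M(h)\ll_{q,d}q^{dh}$. The sum is thus $\ll\sum_{h\ge h_0}\gamma^h$ with
\[
\gamma=q^d\beta=(d-1)^nq^{d-n/2}.
\]
The hypothesis on $q$ gives $q^{(n-2d)/2}>(d-1)^nA_d^{2}>(d-1)^n$, so $\gamma<1$. The geometric series is then $\ll_{q,d,n}\gamma^{e/(d-1)}=q^{-\theta_1 e}$ with $\theta_1=-\log_q\gamma/(d-1)>0$.

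For the second collection, $h_0=\lfloor e/2\rfloor+1$ and $M(h)\ll_d A_d^eq^{dh+1}$. The sum is $\ll q A_d^e\sum_{h>e/2}\gamma^h\ll_{q,d,n}A_d^e\gamma^{e/2}=(A_d\gamma^{1/2})^e$. The condition $A_d\gamma^{1/2}<1$ is equivalent to $q^{(n-2d)/4}>(d-1)^{n/2}A_d$, that is, to $q>(d-1)^{2n/(n-2d)}A_d^{4/(n-2d)}$, which is exactly the hypothesis. Set $\theta_2=-\log_q(A_d\gamma^{1/2})>0$. Taking $\theta=\min(\theta_1,\theta_2)$ gives \eqref{eq:minor-arc-bound}.

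There is no real obstacle in this argument. The points needing care are:
\begin{itemize}
\item checking that the implied constant in \eqref{eq:aggregate-needed} is uniform in $e$ (it is, since it depends only on $q$ and $d$);
\item checking that the $(d-1)^{h}$ factors combine into $\gamma$ and not something larger;
\item noting that the trivial bound $m(h)\le M(h)$ costs only a constant factor, because the geometric series is dominated by its first term.
\end{itemize}
The edge case $n(e+1)$ versus $e(n-d)+n-1$ also needs a check: $n(e+1)-(de+1)=e(n-d)+n-1$, which matches exactly.
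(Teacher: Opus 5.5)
Your proof is correct and follows the paper's argument. You insert \eqref{eq:aggregate-needed}, group the minor arcs by $h(\alpha)$, bound each layer by the cumulative count, and sum geometric series with ratio $(d-1)^nq^{d-n/2}$; the hypothesis on $q$ is exactly $A_d\bigl((d-1)^nq^{d-n/2}\bigr)^{1/2}<1$, and your version simply spells out the steps the paper leaves implicit (the bound $m(h)\le M(h)$ needs only $\beta^h\ge 0$, not $\beta<1$, though this changes nothing).
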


\begin{proof}
Group the non-zero linear functionals by the value of 
$h(\alpha)$ and apply \eqref{eq:aggregate-needed}. Then, the relative
contribution of the first collection is bounded, up to a factor independent of $e$, by
\[\sum_{h\ge e/(d-1)}((d-1)^nq^{d-n/2})^h.\] The relative contribution of the second collection is similarly \[A_d^e\sum_{h>e/2}((d-1)^nq^{d-n/2})^h.\] By the assumption $q$, we have $A_d((d-1)^nq^{d-n/2})^{1/2}<1$, which implies that the common ratio in the two displayed sums is less than one, and, moreover, that after summing the geometric series, both are $O(q^{-\theta e})$ with $\theta > 0$.
This proves the proposition.
\end{proof}

\section{Supports and the auxiliary section}\label{sec:supportsandauxiliary}

Let $0\neq\alpha\in P_{de}^{\vee}$, choose a finite closed subscheme
$Z\subset\P^1$ of minimum length through which $\alpha$ factors,
and write
\[
 m=\deg Z=\deg(\alpha).
\]
By \cite[Lemma 12]{haseliu2024higher}, we have
\[m\le\left\lfloor\frac{de}{2}\right\rfloor+1.\]
The induced functional on $Z$ is primitive, i.e. it doesn't
factor through a proper closed subscheme of $Z$.

Residue duality converts the equations defining $\Sing_\alpha$ into
a congruence relation.  In the notation of
\cite[Lemmas 4.2--4.3]{SawinWaring}, the primitive functional is
represented by an invertible section
\[\widetilde\alpha\in H^0(Z,\omega_{\P^1}(Z)\otimes\O(-de\infty)).\]
For $a\in P_e$, one has $a\in\Sing_\alpha$ iff there is a
unique ``auxiliary''
\[c\in H^0(\P^1,\omega_{\P^1}(Z)\otimes\O(-e\infty))\]
satisfying
\begin{equation}\label{eq:auxiliary-congruence}
 c\rvert_Z=\widetilde\alpha a^{d-1}\rvert_Z.
\end{equation}
After identifying $\omega_{\P^1}(Z)\otimes\O(-e\infty)$ with
$\O((m-e-2)\infty)$, we may write $c\in P_{m-e-2}$.  

The restriction map
\[H^0(\P^1,\omega_{\P^1}(Z)\otimes\O(-e\infty))\to H^0(Z,(\omega_{\P^1}(Z)\otimes\O(-e\infty))\rvert_Z)\]
is injective, since its kernel is $H^0(\P^1,\omega_{\P^1}(-e\infty))=0.$

Residue duality identifies $\Sing_\alpha$ with
the locus where $\widetilde\alpha a^{d-1}\rvert_Z$ lies in the image of
this restriction map.  Choose a linear left inverse of the restriction map.  For
$a\in\Sing_\alpha$, residue duality says that
$\widetilde\alpha a^{d-1}\rvert_Z$ lies in its image, so applying this left
inverse gives the unique section $c$ satisfying
\eqref{eq:auxiliary-congruence}.  This section depends regularly on
$a$, since $\widetilde\alpha a^{d-1}\rvert_Z$ depends polynomially on $a$
and the chosen left inverse is linear.  Consequently, projection onto
$a$ gives an isomorphism
\[
 \{(a,c)\colon c\rvert_Z=\widetilde\alpha a^{d-1}\rvert_Z\}
 \to \Sing_\alpha.
\]
So we can use pairs $(a,c)$ and the irreducible
components and their tangent space dimensions can be identified.

The case $c=0$ can be described explicitly.  If
\[
 Z=\sum_xm_x[x],
\]
let, as in \cite[equation (22)]{SawinWaring},
\[
 D_d(Z)=\sum_x\left\lceil\frac{m_x}{d-1}\right\rceil[x]\text{ and } d_d(Z)=\deg D_d(Z).
\]
Since $\widetilde\alpha$ is invertible on $Z$, the solutions with
$c=0$ (endowed with the reduced induced subscheme structure) form the linear space
\begin{equation}\label{eq:c-zero-space}
 P_e(-D_d(Z)).
\end{equation}
Its codimension is $d_d(Z)$ unless it's just the origin, in which
case its codimension is $e+1$.  This case will be treated separately
below.

\section{Counting the minor arcs}\label{sec:minor_arcs}

The goal of the following two sections is to prove the following proposition.

\begin{prop}\label{prop:aggregate-count}
Assume $p>(d-1)^2$, and let \[A_d=\frac{12}{5}d\cdot100^d.\] Then, every minor arc functional satisfies \[h(\alpha)\ge\frac{e}{d-1}.\] Moreover, the number of $\alpha$ with $h(\alpha)\le h$ such that $c$ vanishes identically on a largest component for a minimum support $Z$ with $2\deg Z < de+2$ is \[O_{q}(q^{dh}).\]

All other minor arc functionals satisfy \[h(\alpha)>\frac{e}{2},\] and the number of these remaining functionals with $h(\alpha)\le h$ is \[O_d(A_d^eq^{dh+1}.)\]
\end{prop}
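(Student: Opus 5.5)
The plan is to work throughout with the pair description of $\Sing_\alpha$ from Section \ref{sec:supportsandauxiliary}. Fix a minimum support $Z=\sum_x m_x[x]$ of $\alpha$, of length $m=\deg(\alpha)$ with $e+2\le m\le\lfloor de/2\rfloor+1$. Then $\Sing_\alpha\cong\{(a,c)\colon c|_Z=\widetilde\alpha a^{d-1}|_Z\}$, and we take $\Lambda$ to be a largest irreducible component, so that $h(\alpha)=e+1-\dim\Lambda$. The proof splits according to whether $c\equiv0$ on $\Lambda$.

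\textbf{Case $c\equiv 0$ on $\Lambda$.} Here $\Lambda\subset P_e(-D_d(Z))$ by \eqref{eq:c-zero-space}, so $h(\alpha)\ge\min(d_d(Z),e+1)$. Since $d_d(Z)\ge m/(d-1)>e/(d-1)$, this gives the bound $h(\alpha)\ge e/(d-1)$ in this case. For the count, note that a functional supported on $Z$ is determined by $Z$ together with a section of a line bundle on $Z$. This gives at most $q^{m}$ functionals per $Z$, and the number of divisors $Z$ with a given $D_d(Z)=D$ is $O_{q,d}(q^{(d-1)\deg D}\cdot C_d^{\#\mathrm{supp}})$. More precisely, we choose the $m_x\le(d-1)\lceil m_x/(d-1)\rceil$. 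Summing over effective $D$ of degree $\le h$ (there are $O_q(q^{h})$ such $D$) gives $q^{h}\cdot q^{(d-1)h}=q^{dh}$, up to constants depending on $q$. The condition $2\deg Z<de+2$ is where this class lives; boundary cases with $2m=de+2$ are moved to the second class.

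\textbf{Case $c\not\equiv0$ on $\Lambda$ (or $2\deg Z=de+2$).} First we want $h(\alpha)>e/2$. The idea is to use Sawin's tangent-space computation for pairs of zero divisors. At a generic point $(a,c)$ of $\Lambda$ with $c\neq0$, the congruence forces $\operatorname{div}(c)$ and $\operatorname{div}(a^{d-1})$ to agree along $Z$ up to the invertible factor $\widetilde\alpha$. Since $\deg c\le m-e-2$, the tangent space consists of $(\dot a,\dot c)$ with $\dot c|_Z=(d-1)\widetilde\alpha a^{d-2}\dot a|_Z$, where $p>d$ makes $d-1$ invertible. Counting conditions then gives $\dim\Lambda\le (m-e-1)+(e+1)-(\text{conditions from }Z)$. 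Combining this with $m\le de/2+1$ and with the separation of common zeros of $a$ and $c$ in the ratio $1\colon d-1$ (where $p>(d-1)^2$ is used to rule out wild ramification in the $(d-1)$-st root comparison), we aim for $\dim\Lambda<e/2+1$.

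For the count, we parametrize the remaining functionals by incidence varieties. The data are triples $(\alpha,a,c)$ satisfying the congruence, together with the divisor data $(\operatorname{div} a,\operatorname{div} c, Z)$ living in products of symmetric powers of $\P^1$. We project to $\P(P_{de}^\vee)$ and stratify by $h$. The stratum $\{h(\alpha)\le h\}$ is then a closed set of dimension $\le dh$, obtained by the same dimension count as in the first case with an extra projective parameter accounting for the $+1$. To bound its point count uniformly in $e$ we use the standard estimate $\#X(\F_q)\le\deg(X)\,q^{\dim X}$ up to constants. It therefore suffices to show that the sum of the degrees of the components of these closed sets is $\le A_d^e$. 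For this we plan to realize them as images of intersections of $O(e)$ hypersurfaces of degree $O(d)$ in multiprojective space, and to apply refined Bézout. The factor $100^d$ per unit of $e$ and the $\tfrac{12}{5}d$ should come from multidegree bookkeeping on $\mathrm{Sym}$ of $\P^1$.

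The main obstacle is this uniform degree bound. Dimension counts alone do not suffice, since the number of components might grow with $e$. We must exhibit the parameter spaces as explicit determinantal or incidence loci whose equations have bounded degree, and then carefully control degrees under projection.
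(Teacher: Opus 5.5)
Your treatment of the case $c\equiv0$ matches the paper's, with one slip. The factor $C_d^{\#\Supp}$ that you drop at the end is not $O_q(1)$, because the support can have $\asymp h$ points. To fix this, either count primitive functionals exactly, or just note that $\alpha$ factors through $(d-1)D_d(Z)$. With exact counting, at a closed point with $Q=q^{\deg P}$ the lengths $(d-1)(k-1)<\nu\le(d-1)k$ telescope to $Q^{(d-1)k}-Q^{(d-1)(k-1)}$; this is the Euler product $\zeta_{\P^1}(q^{d-1}T)/\zeta_{\P^1}(T)$. Also, the restriction $2\deg Z<de+2$ is needed because the minimum support is then unique, hence defined over $\F_q$.

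The genuine gap is the case $c\not\equiv0$, which carries the proposition and which you state only as an aim. An upper bound of the form $(m-e-1)+(e+1)-(\text{conditions from }Z)$ requires knowing how many of the $m$ conditions are independent at a generic point of $\Lambda$. That is exactly the missing input. For a tangent vector, $a\dot c-(d-1)c\dot a$ is a section of $\omega_{\P^1}(Z)$ vanishing on $Z$, hence zero, so $\dot c/c=(d-1)\dot a/a$. In Sawin's root coordinates this freezes every root except two kinds: those of $U$ (common roots off $Z$ with $\ord_x c=(d-1)\ord_x a$), and those with $p\mid((d-1)i-j)/\gcd(i,j)$. This gives $\dim\Lambda\le1+\#\Supp(U)+b(\mu)$. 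The hypothesis $p>(d-1)^2$ is then used to force $i+j\ge d+1$ at each root counted by $b(\mu)$. The budget $\deg\divisor(a)+\deg\divisor(c)=m-2\le de/2-1$ then yields $dR+(d+1)b(\mu)\le m-2$, hence $h(\alpha)\ge e/2+1/d$.

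Likewise, $\dim\le dh$ for the second class is not ``the same count as in the first case''. One writes $a=ug$, $c=u^{d-1}v$ and parametrizes $(Z,[g],[v],[\widetilde\alpha])$ without $U$. Using $L(\mu)\ge d\,b(\mu)$ and $R+b(\mu)\ge e-h$, this gives $2m-2-dR-L(\mu)\le de-d(R+b(\mu))\le dh$. If, as in your incidence, you keep the full divisors of $a$ and $c$, the parameter count only gives $dh+\#\Supp(U)$. You must therefore either discard $U$ or subtract the fiber dimension (at least $e-h$) over $[\alpha]$.

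The uniform degree bound, which you rightly call the main obstacle, is also left unproved, and ``$O(e)$ hypersurfaces of degree $O(d)$'' does not address what has to be controlled. The loci are indexed by the multiplicity data $\mu$. For fixed $\mu$ the natural ambient space is the projective bundle $\P_B((\pi_*\O_{\mathcal Z}(de\infty))^\vee)$ over $B=\prod\P^{\mu(i,j,\nu)}$. Its degree for $\O(1)\otimes\bigboxtimes\O((d-1)i+j+\nu)$ is at most $4^{de}$ times $\frac{(\sum\mu)!}{\prod\mu!}\prod((d-1)i+j+\nu)^{\mu(i,j,\nu)}$. What must be shown exponential in $e$ is the sum of these quantities over all admissible $\mu$. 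The paper does this with the generating function $\bigl(1-\sum((d-1)i+j+\nu)x^{(d-1)i+j+\nu}\bigr)^{-1}$ evaluated at $x=1/5$.

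On each stratum, the proportionality of $\widetilde\alpha g^{d-1}|_Z$ and $v|_Z$ is the zero scheme of a wedge section of a bundle $F$ with $F^\vee\otimes L$ globally generated. This uses relative residue duality and the splitting of $\pi_*\O_{\mathcal Z}(H\infty)\otimes N$. A Bezout lemma for sections of vector bundles and a degree bound for projective images then apply. Without these steps, neither $h(\alpha)>e/2$ for the second class nor the count $O_d(A_d^eq^{dh+1})$ is established.
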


For the rest of this section, we assume $p>(d-1)^2.$ 

We outline the proof.  Fix $h\le e+1$ and consider the minor arcs $\alpha$ with $h(\alpha)\le h$. Fix one such $\alpha$, a corresponding support
$Z$ of minimum length, and an irreducible component $C$ of
$\Sing_\alpha$ of maximum dimension.  We first keep $Z$ and $\alpha$
fixed and bound $\dim C$.  We then let $Z$ and $\alpha$ vary while
retaining only the numerical data arising from the generic point of
$C$.

If $h=e+1$, the claim follows by taking the whole projective
space $\P(P_{de}^{\vee})$, since its dimension is $de$.
We may therefore assume below that $h\le e$.  In particular, a
largest component of codimension at most $h$ cannot be only the
origin.
\subsection{Removing a common factor from $a$ and $c$}
Let $a$ denote the generic point of $C$, and let $c$ be the auxiliary
section associated with $a$ by \eqref{eq:auxiliary-congruence}.
Suppose first that $a,c\neq0$.  All zero
divisors below are taken on $\P^1$, including possible zeroes at
$\infty$.  Define
\[U=\sum_{\substack{x\notin|Z|\\\ord_x(c)=(d-1)\ord_x(a)>0}}\ord_x(a)[x].\]
Let $R=\deg U$, choose a section $u$ with zero divisor $U$, and write
\[
 a=ug\text{ and } c=u^{d-1}v.\]
The case $U=0$ is allowed, in which case $u=1$ and $R=0$. Let
\[ G=e-R \text{ and }
 H=m-e-2-(d-1)R.\]
 Since $g$ and $v$ are non-zero, $G,H\ge0$; the second formula also
gives $H<m$. Since $u$ is invertible on $Z$, cancellation in
\eqref{eq:auxiliary-congruence} gives
\begin{equation}\label{eq:residual-congruence}
 v\rvert_Z=\widetilde\alpha g^{d-1}\rvert_Z.
\end{equation}

For non-negative integers $i,j,$ and $\nu$, not all zero, let
$\mu(i,j,\nu)$ be the number of geometric points $x$ such that
\[\ord_x(g)=i, \ord_x(v)=j, \text{ and }\mult_x(Z)=\nu.\]
This is essentially the same as Sawin's ``joint-multiplicity'' function for $(g,v)$, except we also track an additional entry $\nu$ recording the multiplicity in $Z$. It
satisfies
\[ \sum_{i,j,\nu} i\mu(i,j,\nu)=G,
 \sum_{i,j,\nu} j\mu(i,j,\nu)=H, \text{ and } \sum_{i,j,\nu} \nu\mu(i,j,\nu)=m.\]Note that no triple with $\nu=0$, $i>0$, and $j=(d-1)i$ occurs, because every
such point is already included in $U$.

Next, define
 \[L(\mu)=
 \sum_{\substack{i,j,\nu\\i+j>0}}
 (i+j-1)\mu(i,j,\nu) \text{ and } s=\sum_{\substack{i,j,\nu\\i+j>0}}\mu(i,j,\nu).\]Then, we have\[
 G+H-L(\mu)
 =
 \sum_{\substack{i,j,\nu\\i+j>0}}
 ((i+j)-(i+j-1))\mu(i,j,\nu)
 =s.
\]
So $s$ is exactly the number of distinct points at which $g$ or
$v$ vanishes.

To compute the dimension of the corresponding stratum of divisor
data, choose, for each triple $(i,j,\nu)$ with $i+j>0$ an ordering of the
$\mu(i,j,\nu)$ points having that triple.  Enumerate all the resulting
points as $x_1,\ldots,x_s$, and let $(i_r,j_r,\nu_r)$ be the triple
attached to $x_r$.  Choosing their positions amounts to choosing a
point of
\[
 (\P^1)^s\setminus
 \bigcup_{r\neq t}\{x_r=x_t\},
\]
which is an open subset of $(\P^1)^s$ and hence has dimension $s$.
Once these positions are chosen, the prescribed multiplicities
determine
\[
 \divisor(g)=\sum_{r=1}^s i_r[x_r] \text{ and } \divisor(v)=\sum_{r=1}^s j_r[x_r],
\]
as well as the part $\sum_{r=1}^s\nu_r[x_r]$ of $Z$ supported at these points.  

Conversely, these divisors recover
the points up to permutations among those having the same triple. Forgetting the chosen orderings has finite fibers and therefore does
not change the dimension. 

The remaining support points of $Z$ are those of type $(0,0,\nu)$
with $\nu>0$. When $Z$ is allowed to vary below, their positions
contribute
\[
 \sum_{\nu>0}\mu(0,0,\nu)
\]
additional parameters.

Let
 \[b(\mu)=
 \sum_{\substack{i+j>0,\ \nu=0,\ j\neq(d-1)i\\
 p\mid((d-1)i-j)/\gcd(i,j)}}\mu(i,j,0).\]
Note that $b(\mu)$ is the number of points away from $Z$ for which
$i+j>0$, $j\neq(d-1)i$, and
\begin{equation}\label{eq:normalized-divisibility}
 p \text{ divides }
 \frac{(d-1)i-j}{\gcd(i,j)}.
\end{equation}
This is exactly the part of Sawin's count in \cite[Lemma 4.7]{SawinWaring} that remains after the roots in $U$ have been separated and the roots on the fixed scheme $Z$ have been omitted.
\subsection{The dimension of the fixed component}

With the notation above, we claim that
\begin{equation}\label{eq:component-dimension} \dim C\le1+\#\Supp(U)+b(\mu)\le R+1+b(\mu).\end{equation} This is essentially the same as the proof of \cite[Lemmas 4.7]{SawinWaring}, but we spell out the details (mostly for our own sake). 

Choose a smooth geometric point $(a,c)$ of a dense open subset of $C$
on which the joint multiplicities of the zero divisors of $a$ and $c$,
as well as their orders at every point of $\Supp(Z)$, are constant. We
use the ``root coordinates'' of \cite[Lemma 4.6]{SawinWaring}. In particular, if $(\dot a,\dot c)$ is a tangent vector at $(a,c)$, differentiating \eqref{eq:auxiliary-congruence} gives
\[\dot c\rvert_Z =(d-1)\widetilde\alpha a^{d-2}\dot a\rvert_Z.\]
It follows that the section
\[a\dot c-(d-1)c\dot a \in H^0(\P^1,\omega_{\P^1}(Z))\]
vanishes on $Z$. It therefore belongs to
$H^0(\P^1,\omega_{\P^1})=0$, and hence
\begin{equation}\label{eq:logarithmic-tangent-identity}\frac{\dot c}{c}=(d-1)\frac{\dot a}{a}.\end{equation}

Let $x$ be a root at which $a$ and $c$ have orders $r$ and $s$, and
let $p^w$ be the greatest power of $p$ dividing $\gcd(r,s)$. Choose a
local parameter $\pi_x$ at $x$. In the
coordinates of \cite[Lemma 4.6]{SawinWaring}, equation (26) there
expresses the two logarithmic derivatives as
\[s_x\left(\frac{r}{p^w}\frac1{\pi_x^{p^w}},
\frac{s}{p^w}\frac1{\pi_x^{p^w}}\right)\]
for a scalar $s_x$. Combining this with
\eqref{eq:logarithmic-tangent-identity} shows that $s_x=0$ unless
$p$ divides $\frac{(d-1)r-s}{\gcd(r,s)}.$

At a root lying in $\Supp(Z)$, the scalar $s_x$ is zero because that root is fixed on the chosen stratum and the orders of vanishing of $a$ and $c$ are constant. Away from $Z$, the roots for which the displayed divisibility can hold consist precisely of the points of $\Supp(U)$ and the roots counted by $b(\mu)$. 

Then, let $T_0$ be the subspace of the tangent space on which $s_x=0$ at the
points of $\Supp(U)$ and at the roots counted by $b(\mu)$. These
conditions give
\[\codim(T_0)\le\#\Supp(U)+b(\mu).\]
For a vector in $T_0$, all the remaining scalars $s_x$ vanish by the
preceding paragraph and \eqref{eq:logarithmic-tangent-identity}. Thus
$\dot a/a$ and $\dot c/c$ have no poles on $\P^1$, so both are constant.
Equation \eqref{eq:logarithmic-tangent-identity} then shows that the
vector is determined by the single constant $\dot a/a$. Hence
$\dim T_0\le1$, and therefore
\[
 \dim C\le1+\#\Supp(U)+b(\mu)
 \le R+1+b(\mu),
\]
as desired.

Next, since $C$ is a component of maximum dimension,
$h(\alpha)=e+1-\dim C$.  Hence
\begin{equation}\label{eq:height-from-component}
 h(\alpha)\ge e-R-b(\mu).
\end{equation}
In particular, if $h(\alpha)\le h$, then
\begin{equation}\label{eq:component-to-height}
 R+b(\mu)\ge e-h.
\end{equation}

We also have the following elementary inequality.  Every pair $(i,j)$ counted
by $b(\mu)$ satisfies
\[
 i+j-1\ge d.
\]
To see this, suppose otherwise and divide $(i,j)$ by its greatest
common divisor, obtaining a coprime pair $(I,J)$.  Since
$j\neq(d-1)i$ and $I+J\le d$, one has
\[
 0<|(d-1)I-J|<(d-1)^2.
\]
If $I=0$, then $J=1$, and if $J=0$, then $I=1$, so the same strict
inequality holds in the two boundary cases.
This contradicts \eqref{eq:normalized-divisibility} because the
characteristic is greater than $(d-1)^2$.  Using this observation and summing over the points
counted by $b(\mu)$ therefore gives
\begin{equation}\label{eq:multiplicity-cost}
 L(\mu)\ge d\,b(\mu).
\end{equation}
\subsection{Letting the functional vary}
The constructions in Section \ref{sec:effectiveconstants} realize the data considered below as locally closed strata in certain parameter spaces. More precisely, for fixed $m,R,\mu,$ the tuples $(Z,[g],[v],[\widetilde \alpha])$ considered below form a locally closed incidence locus in the projective bundle constructed in Section \ref{sec:effectiveconstants}. The  base records the joint divisor data, the fibers of the projective bundle record the line $[\widetilde \alpha]$, and the incidence condition is that $\widetilde \alpha g^{d-1}|_Z$ is proportional to $v\rvert_Z$. Its image in $\P(P_{de}^\vee)$ contains all the classes $[\alpha$ under consideration. We now bound the dimension of this incidence locus.

Multiplying $g$, $v$, or $\widetilde\alpha$ by a non-zero scalar does
not change its zero divisor. From this point onward we
therefore consider these three sections up to scalar, and we also modify \eqref{eq:residual-congruence} to be true up to scalars.  Now fix $m,R$, and $\mu$, but allow $Z$, $[g]$, $[v]$ (we use $[-]$ to denote the section up to scalar), and the primitive functional on $Z$ to vary. 

Now, let $\lambda$ be the non-zero functional on
$H^0(Z,\O_Z(de\infty))$ represented by $\widetilde\alpha$ under
residue duality.  If we define $\alpha(f)=\lambda(f\rvert_Z)$ for all $f\in P_{de},$ then since the restriction map $P_{de}\to H^0(Z,\O_Z(de\infty))$ is surjective by $H^1(\P^1,\O(de\infty-Z))=0$, it follows that $\alpha\ne0$, and
scaling $\widetilde\alpha$ merely scales $\alpha$. We therefore obtain
a well-defined point $[\alpha]\in\P(P_{de}^{\vee})$.

Note that the divisor $U$ is deliberately not included among the varying data.
Its support has already contributed at most $R$ parameters to
\eqref{eq:component-dimension}, while $u^{d-1}$ has cancelled from
\eqref{eq:residual-congruence}.  Nevertheless, removing $U$ lowered
the sum of the degrees of $a$ and $c$ by $dR$.  This is the central
saving.

We next count the remaining parameters.  The distinct points in the
zero divisors of $g$ and $v$ contribute $G+H-L(\mu)$ parameters.  The
support points of $Z$ at which neither section vanishes contribute
\[
 \sum_{\nu>0}\mu(0,0,\nu)
\]
further parameters.  Note that we are considering $g$ and $v$ up to scalar, and a non-zero section of $\O(r)$ on $\P^1$ is determined up to scalar by its degree $r$ zero divisor.

For fixed $Z,g,v$, multiplication by $g^{d-1}$ defines the natural map $H^0\left(Z,\O_Z((m-de-2)\infty)\right)
 \to H^0\left(Z,\O_Z(H\infty)\right).$
At a point where $Z$ has length $\nu$ and $g$ has order $i$, the local
kernel has dimension $\min((d-1)i,\nu)$, since, up to unit, multiplication by $g^{d-1}$ is multiplication by $t^{(d-1)i}$ modulo $t^\nu$.  Moreover, $H<m$, so a
non-zero section of degree $H$ cannot vanish on the length $m$ subscheme
$Z$; hence $v\rvert_Z$ is non-zero.  The inverse image of the line spanned
by $v\rvert_Z$ is a vector space of dimension at most one plus the kernel
dimension.  It follows that the possible points
$[\widetilde\alpha]$ for which
$\widetilde\alpha g^{d-1}\rvert_Z$ is proportional to $v\rvert_Z$ (i.e. \eqref{eq:residual-congruence}) contribute at
most
\[
 \sum_{\substack{i,j,\nu\\i+j>0}}
 \min((d-1)i,\nu)\mu(i,j,\nu)
\]
parameters.  The requirements that the functional be primitive and
that the proportionality is non-zero are open conditions and do
not increase dimension.

The two sums range over disjoint types: the first has $i=j=0$, while
the second has $i+j>0$.  Bound each term
$\mu(0,0,\nu)$ in the first sum by $\nu\mu(0,0,\nu)$, and each term
in the second by $\nu\mu(i,j,\nu)$.  Their total is then at most
$\sum_{i,j,\nu}\nu\mu(i,j,\nu)=m$.  

Combining with the computations from earlier, we therefore have that the family of
the data $(Z,[g],[v],[\widetilde\alpha])$ has dimension at most
\begin{equation}
 G+H-L(\mu)+m=2m-2-dR-L(\mu)\le de-dR-L(\mu),
 \label{eq:varying-dimension}
\end{equation}
where the last inequality follows from
$2m\le de+2$.  If $h(\alpha)\le h$, then
\eqref{eq:component-to-height}, \eqref{eq:multiplicity-cost}, and
\eqref{eq:varying-dimension} show that the corresponding parameter
space has dimension at most
\begin{equation}\label{eq:dh-bound}
 de-dR-L(\mu)
 \le de-d(R+b(\mu))
 \le dh.
\end{equation}
It's also clear that every $\alpha$ under consideration occurs in
such a parameter space.  Fix $\alpha$, a support $Z$ of
minimum length, and a largest irreducible component
$C\subset\Sing_\alpha$ on which the generic auxiliary section $c$ is
non-zero. Then choose a geometric point $a$ in a dense smooth locally closed subset of $C$ on which $a$ and $c$ are non-zero and the joint multiplicity data of their zeroes is constant. Now form $U$ and write $a=ug$ and $c=u^{d-1}v$ as earlier. This determines $m,R,$ and $\mu$, and gives a point $(Z,[g],[v],[\widetilde\alpha])$ of the associated parameter space. Moreover, $\widetilde\alpha$ recovers $\alpha$ via the map from this parameter space to $\P(P_{de}^\vee)$ discussed above. The parameter spaces may
also contain additional points, but this is harmless for an upper bound.
\subsection{The case where the auxiliary section is zero}Suppose that the generic auxiliary section $c$ on a largest component is
zero.  By the graph description following
\eqref{eq:auxiliary-congruence}, the coefficients of $c$ are regular
functions of $a$.  Since they vanish at the generic point of the
component, they vanish identically on it.  Then, by \eqref{eq:c-zero-space} the component is contained in $P_e(-D_d(Z))$.  Conversely, that entire
linear space lies in $\Sing_\alpha$, so by maximality of the irreducible
component we have equality.  Its codimension is $d_d(Z)$ unless it is
just the origin.  But, the latter possibility is not possible because we are working with the case $h(\alpha)\le e$.

We next count these functionals directly, except at the endpoint $2m=de+2.$

First, observe that if $2m<de+2$, then actually the minimum support is unique by \cite[Lemma 13]{haseliu2024higher}. 

So it remains to count a support together with a primitive functional on it. Let $P$ be a closed point. The number of primitive functionals on the divisor $\nu [P]$ is $q^{\nu \deg P}-q^{(\nu-1)\deg P}$. Since primitivity can be checked at the different closed points, by letting $T$ record $d_d(Z)$, we have 
\begin{align*}
    \sum_Z \#\{\ell\colon \ell \text{ primitive on }Z\}T^{d_d(Z)} &= \prod_{P\text{ closed point}}\left(1+\sum_{\nu\ge 1}(q^{\deg P}-1)q^{(\nu-1)\deg P}T^{\lceil \nu/(d-1)\rceil \deg P}\right) \\ 
    &= \prod_{P \text{ closed point}}\frac{1-T^{\deg P}}{1-q^{(d-1)\deg P}T^{\deg P}} \\
    &= \frac{\zeta_{\P^1}(q^{d-1}T)}{\zeta_{\P^1}(T)} \\
    &= \frac{(1-T)(1-qT)}{(1-q^{d-1}T)(1-q^dT)}.
\end{align*}

Observe that the coefficient of $T^t$ in the reciprocal of the denominator is \[q^{dt}\sum_{a=0}^tq^{-a}\ll_q q^{dt}.\]
Multiplying this by the numerator $(1-T)(1-qT)$, the coefficient of $T^t$ still remains $O_{q}(q^{dt}).$ Now, summing all the coefficients ranging from $t=0$ to $t=h$ therefore gives \[O_q(q^{dh}),\] as required by Proposition \ref{prop:aggregate-count}.

Let $\lambda(\nu)$ be the number of points at which $Z$ has
multiplicity $\nu$.  The possible supports $Z$ with these
multiplicities form an open subset of
\[
 \prod_{\nu\ge1}\Sym^{\lambda(\nu)}(\P^1),
\]
of dimension $\sum_\nu\lambda(\nu)$.  For a fixed $Z$, the primitive
functional lines form an open subset of a projective space of
dimension $m-1$.  Hence the total dimension is
\[
 \sum_\nu\lambda(\nu)+m-1.
\]
Note that 
\[
 m=\sum_{\nu\ge1}\nu\lambda(\nu) \text{ and }
 d_d(Z)=\sum_{\nu\ge1}
 \left\lceil\frac{\nu}{d-1}\right\rceil\lambda(\nu),
\]
from which it's clear that 
\[
 m\le(d-1)d_d(Z) \text{ and }
 \sum_\nu\lambda(\nu)\le d_d(Z).
\] So this total dimension is at most $d\,d_d(Z)-1$.  If
$h(\alpha)\le h$, it follows that the dimension of the parameter space is at most \begin{equation} \label{eq:dh-1bound}
    dh-1.
\end{equation}

Conversely, if $\alpha$ arises in this case, choose a minimum support
$Z$ and record its multiplicity function $\lambda$ and its primitive
functional line on $Z$.  Precomposition with restriction again sends
that line to $[\alpha]$.  Thus these families also contain every
$\alpha$ arising this way.
\subsection{The lower bound for a minor arc}Next, we show that \[h(\alpha)\ge \frac{e}{d-1}.\] Again, if $h(\alpha)=e+1$, then the desired inequality is immediate, so we can assume that $h(\alpha)\le e$.

Let $\alpha$ be a
minor arc, so that its minimum support has length
$m>e+1$, and choose an irreducible component
$C\subset\Sing_\alpha$ of maximum dimension.  We use the descriptions
above according to whether the generic auxiliary section $c$ on $C$ is zero or
non-zero. In the case
$c=0$, we have $P_e(-D_d(Z))\neq\{0\}$, so
\[
 h(\alpha)=d_d(Z)\ge\frac{m}{d-1}>
 \frac{e}{d-1}.
\]
In the case $a,c\neq0$, equations
\eqref{eq:height-from-component} and
\eqref{eq:multiplicity-cost} give a stronger estimate.  The divisor
$U$ contributes $dR$ to
$\deg\divisor(a)+\deg\divisor(c)=m-2$. A point counted by $b(\mu)$ has type $(i,j,0)$, where
$i=\ord_x(g)$ and $j=\ord_x(v)$.  Such a point lies outside
$\Supp(U)$, so $u$ is a unit at $x$, which means that $\ord_x(a)=i$ and $\ord_x(c)=j$. It therefore contributes $i+j\ge d+1$
to $\deg\divisor(a)+\deg\divisor(c)$ by the argument proving \eqref{eq:multiplicity-cost}. These points
are distinct, each has one pair of orders $(i,j)$, and all lie outside
$\Supp(U)$, so there is no overlap in the count. Thus
\[
 dR+(d+1)b(\mu)\le m-2\le\frac{de}{2}-1.
\]
It follows that $R+b(\mu)\le e/2-1/d$, and therefore (since $d\ge 3$)
\[
 h(\alpha)\ge e-R-b(\mu)
 \ge\frac e2+\frac1d > \frac e {d-1}.
\]
Moreover, if $c=0$ and $\alpha$ is at the endpoint, i.e. $2m=de+2$, then \[h(\alpha)=d_d(Z)\ge \frac{m}{d-1}>\frac e2,\] as desired.
\section{Effective constants via intersection theory}\label{sec:effectiveconstants}
We make the remaining degree estimate precise. For a projective scheme $V$ over $k$ and a very ample line bundle $\mathcal{A}$, define the cumulative degree \[\cdeg_{\mathcal{A}}V = \sum_{D}\deg_{\mathcal{A}}D,\]where $D$ ranges over all geometrically irreducible components endowed with the reduced induced subscheme structure. When $\mathcal{A}$ is the usual hyperplane bundle, we will omit the subscript.

The reason for using cumulative degree is the following projective point count bound \cite[Theorem 2.1]{LachaudRolland}:\[\#V(\F_q)\le \cdeg(V)(1+q+\cdots + q^{\dim V}).\]

The goal of this section is to prove the following.

\begin{prop}\label{prop:effective-cover}
Assume $p>(d-1)^2$, and let \[A_d = \frac{12}{5}d\cdot100^d.\] For every $e\ge 1$ and $1\le h\le e+1$, there is a projective closed set 
\[
 V_{e,h}\subset\P(P_{de}^{\vee}),
\]
defined over $\F_q$, which contains the class $[\alpha]$ of every
minor arc $\alpha$ satisfying $h(\alpha)\le h$, except those for which
$c$ vanishes identically on a largest component for a minimum support
$Z$ and $2\deg Z<de+2$. Moreover, $\dim V_{e,h}\le dh$ and $\cdeg V_{e,h}\le A_d^e$.
\end{prop}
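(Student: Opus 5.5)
We will take $V_{e,h}$ to be the union, over all admissible numerical data, of the closures of the images in $\P(P_{de}^{\vee})$ of the incidence loci from Section \ref{sec:minor_arcs}. The relevant data are: the length $m$ with $e+1<m\le\lfloor de/2\rfloor+1$, the integer $R$, and the joint multiplicity function $\mu$ for the case $c\ne0$. We also include the endpoint families for $c=0$ with $2m=de+2$, indexed by $\lambda$. We keep only those data for which the dimension bounds \eqref{eq:dh-bound}, respectively \eqref{eq:dh-1bound}, give dimension at most $dh$. Every $\alpha$ in the statement lies in one of these images by the converse arguments of Section \ref{sec:minor_arcs}. The image of each incidence locus has dimension at most that of the locus, so $\dim V_{e,h}\le dh$ follows at once. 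The images are constructible, defined over $\F_q$ because the data are Galois-stable, and their closures are projective. The content of the proposition is therefore the bound $\cdeg V_{e,h}\le A_d^e$.

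For the degree bound, we first construct each family concretely as a closed subvariety of a product of projective spaces. For fixed $(m,R,\mu)$, record the divisors of $g$, $v$ and the support of $Z$ as points of $\P(P_G)\times\P(P_H)\times\P(P_m)$. Here $Z$ is a degree-$m$ divisor, hence a point of $\P(P_m)$. Let $W$ be the closure of the stratum with prescribed multiplicities. The line $[\widetilde\alpha]$ varies in the projective bundle $\P(H^0(Z,\O_Z(m-de-2)))$ over $W$. Twisting the rank-$m$ bundle $\pi_*\O_{\mathcal Z}(\cdots)$ so that it is globally generated, this bundle embeds in $W\times\P^{N}$ with $N\approx de$. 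The incidence condition "$\widetilde\alpha g^{d-1}|_Z\wedge v|_Z=0$" is cut out by $2\times2$ minors that are multihomogeneous of bounded multidegree, roughly $(d-1,1,\ast,1)$. The map to $\P(P_{de}^\vee)$, $[\widetilde\alpha]\mapsto[\alpha]$, is linear in $\widetilde\alpha$ with coefficients polynomial in the coordinates of $Z$ of degree $O(de)$.

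We then bound degrees by the refined Bézout inequality (Fulton, Example 8.4.6). The cumulative degree of an intersection is at most the product of the degrees of the cutting hypersurfaces times the degree of the ambient space. We use the projection formula to compare the pullback of the hyperplane class on $\P(P_{de}^\vee)$ with a combination of hyperplane classes on the factors. Each multiplicity stratum $W$ is the image of a product of diagonals under $(\P^1)^s\to\Sym$ maps. Its multidegree is therefore bounded by multinomial coefficients of the shape $\binom{G+H+m}{\ldots}\cdot\prod (i+j+\nu)$, and these are $\le c^{e}$ for an absolute $c$ since $G,H,m=O(de)$. Raising the hyperplane class to the power $\dim\le dh\le de$ and expanding gives a factor like $\binom{2de}{de}\le 4^{de}$. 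The Bézout factors from the incidence equations contribute at most $d^{O(e)}$ after restricting to the $\le de$ equations needed.

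Finally we sum over the data. The number of $(m,R)$ is $O(de^2)$. The number of multiplicity functions $\mu$ is bounded by the number of partitions-type configurations of total weight $G+H+m\le de+2$ in three coordinates, which is $\le C^{de}$ with an explicit $C$. The endpoint families contribute similarly. Collecting constants should produce $100^{de}$-type growth; the stated form $\frac{12}{5}d\cdot100^d$ suggests that $100$ absorbs the per-unit-weight factors and that the polynomial factors in $e$ are absorbed into the prefactor. The main obstacle is the degree of the projective-bundle embedding together with the image map. Here a careless bound gives $e^{O(e)}$ rather than exponential growth. I would try first to avoid pushing forward through $\Sym$ by working on the ordered cover $(\P^1)^s\times\P^{m-1}$, where all classes are explicit. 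I would then bound the intersection number $\int (a H_1+\dots+b H_{\P})^{dh}$ directly by multinomial coefficients, using that $s\le G+H$ and that each $\P^1$ factor can absorb at most one hyperplane class.
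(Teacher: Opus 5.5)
Your outer architecture matches the paper's. $V_{e,h}$ is the union of the images of the incidence loci over the data $(m,R,\mu)$ and the endpoint data $\lambda$. The dimension comes from \eqref{eq:dh-bound} and \eqref{eq:dh-1bound}. The degree is controlled by B\'ezout, a projection-formula bound for images, and a sum over the data.

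There is a genuine gap: the part you correctly call the content of the proposition, $\cdeg V_{e,h}\le A_d^e$, is not proved. The obstacle you flag is left open, and your remedy does not address it.
\begin{itemize}
\item \emph{Image map and incidence equations.} If the fiber coordinate is $[\widetilde\alpha]$ and $[\widetilde\alpha]\mapsto[\alpha]$ is written via residues with coefficients of degree $O(de)$ in the coordinates of $Z$, the pulled-back hyperplane class is $\xi+O(de)H_Z$. Since $Z$ moves in a family of dimension comparable to $e$, every bound of the form $\int(\eta+\zeta)^{\dim}$ is then super-exponential in $e$. The same happens to your unspecified $\ast$: reducing $\widetilde\alpha g^{d-1}$ modulo $Z$ in naive coordinates also has degree $O(de)$ in $Z$.
\item \emph{Ordered cover.} Passing to $(\P^1)^s$ fixes neither problem. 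It adds a factor $\prod\mu(i,j,\nu)!$ (e.g.\ $s!$ when all points share one type, with $s$ of order $e$), which must be divided back out. Also, the bundle over it is not a product with $\P^{m-1}$.
\item \emph{B\'ezout factor.} Your ``$d^{O(e)}$'' is unjustified. With multidegree $(d-1,1,\ast,1)$ and a uniform polarization, Lemma \ref{lem:Bezout} gives $\max(d-1,\ast)^{k}$, where the number of cuts $k$ can be of order $de$. That already rules out an $A_d$ of size $100^d$.
\item \emph{Summing over $\mu$.} ``$\le c^e$ since $G,H,m=O(de)$'' is not a valid reason, since multinomials of that size can be as large as $(de)!$. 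Exponential growth comes from the weight constraint. The constant $\frac{12}{5}d\cdot100^d$ is guessed rather than derived.
\end{itemize}

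The missing ingredient is to make every step canonical, so that each point carries only an $O(1)$ weight.
\begin{itemize}
\item The paper parametrizes the functional line $[\ell]$ itself by $\P_B((\pi_*\O_{\mathcal Z}(de\infty))^\vee)$ over $B=\prod\Sym^{\mu(i,j,\nu)}\P^1$. Dualizing the surjection $P_{de}\otimes\O_B\to\pi_*\O_{\mathcal Z}(de\infty)$ embeds this bundle in $B\times\P(P_{de}^\vee)$. So the map to $\P(P_{de}^\vee)$ pulls $\O(1)$ back to the tautological $\O(1)$, with no dependence on $Z$.
\item The Segre classes $s_k=\binom{de-m+1}{k}c_1(N)^k$, with $N=\bigboxtimes\O(\nu)$, then cost only $2^{de+r}\le4^{de}$ (Lemma \ref{lem:degofprojectivebundleparam}).
\item Relative duality passes from $\ell$ to $\widetilde\alpha$ at the price of a twist by $N$ only.
\item The splitting $\pi_*\O_{\mathcal Z}(H\infty)\otimes N\cong N^{\oplus(H+1)}\oplus\O_B^{\oplus(m-H-1)}$ shows that the wedge bundle $F$ has $F^\vee\otimes L$ globally generated for the weighted polarization $L=\O(1)\otimes\bigboxtimes\O((d-1)i+j+\nu)$. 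Hence Lemma \ref{lem:Bezout} applies with $b=1$ at a cost of only $2de+1$.
\item The weights are thereby moved into the degree of $B$ for $\bigboxtimes\O((d-1)i+j+\nu)$, which is $\frac{r!}{\prod\mu(i,j,\nu)!}\prod((d-1)i+j+\nu)^{\mu(i,j,\nu)}$ with $r=\sum\mu(i,j,\nu)$.
\item Summing these over $\mu$ by the generating-function argument of Lemma \ref{lem:sum-multiplicity-degrees}, evaluated at $x=1/5$, gives $20\cdot25^{(d-1)e}$.
\end{itemize}
Together these yield $20(2de+2)4^{de}25^{(d-1)e}\le A_d^e$.
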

We first treat the case in which the generic auxiliary section $c$ is non-zero.  Then, in the notation of Section \ref{sec:minor_arcs}, we have
\[ a=ug \text{ and } c=u^{d-1}v\] with $g$ and $v$ non-zero.  Fix $m,R,$ and $\mu$ arising in this case. We construct a closed image containing the corresponding linear functionals. Its degree is bounded by an exponential in $e$ times an explicit multinomial expression in $\mu$. A generation function argument then shows that the sum of these expressions is exponential in $e$. We separately address the endpoint of the case $c=0$ which was not counted directly in Section \ref{sec:minor_arcs}.
\subsection{A projective family for fixed multiplicity data}
Fix $m,R,\mu$ as defined in Section \ref{sec:minor_arcs}. In particular, we have \[\sum_{i,j,\nu}i\mu(i,j,\nu)=e-R=G,\sum_{i,j,\nu}j\mu(i,j,\nu)=m-e-2-(d-1)R=H, \sum_{i,j,\nu}\nu\mu(i,j,\nu)=m,\]as well as $\mu(i,(d-1)i,0)=0$. 

For each $(i,j,\nu)$ with $\mu(i,j,\nu)>0$, choose an effective divisor $D_{i,j,\nu}$ of degree $\mu(i,j,\nu)$. For the open set where these divisors are reduced and pairwise disjoint, they determine \begin{equation}\label{eq:three_divisors}Z=\sum_{i,j,\nu \colon \mu(i,j,\nu)>0}\nu D_{i,j,\nu}, \divisor(g) = \sum_{i,j,\nu \colon \mu(i,j,\nu)>0}iD_{i,j,\nu}, \text{ and }\divisor(v)=\sum_{i,j,\nu \colon \mu(i,j,\nu)>0}jD_{i,j,\nu}.    
\end{equation} Note that a non-zero section on $\P^1$ is determined up to scalar by its zero divisor, so these last two divisors determine the projective classes $[g]$ and $[v]$. 

Letting the divisors vary gives a projective parameter space \[B=\prod_{i,j,\nu \colon \mu(i,j,\nu)>0}\Sym^{\mu(i,j,\nu)}(\P^1)=\prod_{i,j,\nu \colon \mu(i,j,\nu)>0}\P(P_{\mu(i,j,\nu)}).\] In the following, we write \[\bigboxtimes_{i,j,\nu \colon \mu(i,j,\nu)>0}\O(a_{i,j,\nu})\] to be the box product over line bundles corresponding to that of our parameter space $B$ of tuples of divisors.

Next, let $\mathcal{Z}\subset \P^1\times B$ be the universal divisor whose fiber corresponds to the first divisor in \eqref{eq:three_divisors}, and let $\pi\colon \mathcal{Z}\to B$ be the projection. We claim it is finite and flat of degree $m$. Indeed, it is clearly finite because it is proper and has finite fibers. For flatness, note that the universal divisor associated to $D_{i,j,\nu}$ has corresponding line bundle $\O_{\P^1}(\mu(i,j,\nu)\infty)\boxtimes \O(1)$. The universal divisor sequence associated to $\mathcal{Z}$ is hence \begin{equation}\label{eq:universal_divisor_sequence}
    0\to \O_{\P^1}(-m\infty)\boxtimes \bigboxtimes_{\mu(i,j,\nu)>0}\O(-\nu)\to \O\to \O_\mathcal{Z}\to 0.
\end{equation} Pushing this forward along $\pi$ gives the exact sequence \[0\to \O_{B}\to \pi_*\O_{\mathcal{Z}}\to H^1(\P^1,\O(-m))\otimes \bigboxtimes_{\mu(i,j,\nu)>0}\O(-\nu)\to 0,\]which shows that $\pi_*\O_\mathcal{Z}$ is locally free of rank $m$. This implies that $\pi$ is finite flat of degree $m$.

Twisting \eqref{eq:universal_divisor_sequence} and pushing forward along $\pi$ gives the universal restriction \[P_{de}\otimes \O_B\to \pi_* \O_{\mathcal{Z}}(de\infty),\] which is a surjection because $de-m\ge -1$. Using the convention that projective bundles parametrize lines, consider \begin{equation}\label{eq:projectivebundleparam}
    \P_{B}((\pi_*\O_{\mathcal{Z}}(de\infty))^\vee).
\end{equation} This parametrizes the divisor data together with a line \[[\ell]\in \P(H^0(Z,\O_Z(de\infty))^\vee).\]

Now, dualizing the universal restriction map gives a closed immersion of \eqref{eq:projectivebundleparam} into the product $\prod_{i,j,\nu \colon \mu(i,j,\nu)>0}\P(P_{\mu(i,j,\nu)})\times \P(P_{de}^\vee)$. Projecting this down to $\P(P_{de}^\vee)$ gives a proper map which on points is given by \begin{equation}\label{eq:projectivefamilybeforeincidence}
    (D_{i,j,\nu},[\ell])\mapsto [\ell \circ \res_Z].
\end{equation}

\subsection{Imposing a closed incidence relation}
In \eqref{eq:projectivefamilybeforeincidence}, it remains to impose the condition $\widetilde{\alpha}g^{d-1}\rvert_Z = v\rvert_Z$. Since we only care about the projective classes $[g]$ and $[v]$, this equality translates to an equality up to non-zero scaling. 

For a fixed $Z$, residue duality identifies the $[\ell]$ with $[\widetilde \alpha]$, which we next discuss in families (i.e. as $Z$ varies). Let $\mathcal{L}$ be a line bundle on $\mathcal{Z}$ and $\omega_\pi$ be the relative canonical sheaf.  By Grothendieck duality, we have \[(\pi_* \mathcal{L})^\vee \cong \pi_* (\mathcal{L}^{-1}\otimes \omega_\pi).\] Moreover, by adjunction, we have \[\omega_\pi\cong \O_\mathcal{Z}((m-2)\infty)\otimes \bigboxtimes_{i,j,\nu\colon \mu(i,j,\nu)>0}\O(\nu).\] For $\mathcal{L}=\O_\mathcal{Z}(de\infty)$, we then get  \[(\pi_*\O_{\mathcal{Z}}(de\infty))^\vee\cong \pi_*\O_{\mathcal{Z}}((m-de-2)\infty)\otimes \bigboxtimes_{\mu(i,j,\nu)>0}\O(\nu).\] This is the family version of the residue duality discussed earlier.

Next, let $\O(-1)$ is the tautological line of linear functionals on \eqref{eq:projectivebundleparam}. Let us explain what $\widetilde{\alpha}g^{d-1}\rvert_Z$ and $v\rvert_Z$ correspond to. For $\widetilde{\alpha}g^{d-1}\rvert_Z$, note that the multiplication by $g^{d-1}$ gives the map:
\begin{equation}\label{eq:leftside}
    \O(-1)\otimes \bigboxtimes_{\mu(i,j,\nu)>0}\O(-(d-1)i)\to \pi_*\O_{\mathcal{Z}}((m-e-2-(d-1)R)\infty)\otimes \bigboxtimes_{\mu(i,j,\nu)>0}\O(\nu).
\end{equation} 
More precisely, the line spanned by $g$ corresponds to
\[\bigboxtimes_{\mu(i,j,\nu)>0}\O(-i),
\]
which is equipped with the universal evaluation map
\[\bigboxtimes_{\mu(i,j,\nu)>0}\O(-i)\to \O_{\P^1}((e-R)\infty).
\]
Relative residue duality gives a line of sections
\[\O(-1)\to\pi_*\O_{\mathcal{Z}}((m-de-2)\infty)\otimes \bigboxtimes_{\mu(i,j,\nu)>0}\O(\nu).\]
For $v\rvert_Z$, after tensoring by $\boxtimes_{\mu(i,j,\nu)>0}\O(\nu)$ to match the target of the previous map, we similarly have \begin{equation}\label{eq:rightside} \bigboxtimes_{\mu(i,j,\nu)>0}\O(\nu-j)\to \pi_*\O_{\mathcal{Z}}((m-e-2-(d-1)R)\infty)\otimes \bigboxtimes_{\mu(i,j,\nu)>0}\O(\nu).\end{equation}
Two vectors are proportional when their wedge is zero: wedging the two maps \eqref{eq:leftside} and \eqref{eq:rightside}, we obtain a section of \begin{equation}\label{eq:wedgebundle}
    \O(1) \otimes \bigboxtimes_{\mu(i,j,\nu)>0}\O((d-1)i+j-\nu)\otimes \bigwedge^2\left(\pi_*\O_{\mathcal{Z}}((m-e-2-(d-1)R)\infty)\otimes \bigboxtimes_{\mu(i,j,\nu)>0}\O(\nu)\right).
\end{equation}
In particular, the zero scheme is the family version of the locus encoding this proportionality. There are a few open conditions inside this zero scheme: 1. the divisors $D_{i,j,\nu}$ must be reduced and pairwise disjoint, 2. the functional must be primitive, and 3. the map \eqref{eq:leftside} should be non-zero. 

Take the reduced closure of this open locus, and after projecting via the map described in \eqref{eq:projectivefamilybeforeincidence}, denote the reduced image by \[W_{m,R,\mu}\subset \P(P_{de}^\vee).\]

\subsection{Two general degree bounds} We prove two general facts about degree. The first is a version of Bezout's theorem for the zero scheme of the section of a vector bundle. 
\begin{lem}\label{lem:Bezout}
    Let $T$ be an integral projective scheme of dimension $N$ over an algebraically closed field, $L$ be a very ample line bundle on $T$, and $F$ be a vector bundle on $T$. Suppose $F^\vee \otimes L^{\otimes b}$ is globally generated for some positive $b$. Then, any section $s$ of $F$ satisfies \[\cdeg_LZ(s)\le \sum_{j=0}^Nb^j\deg_L T\le (N+1)b^N\deg_L T.\]
\end{lem}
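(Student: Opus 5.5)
The plan is to reduce to Bezout's inequality for hypersurfaces by cutting $Z(s)$ with divisors in $|L^{\otimes b}|$. The global generation hypothesis is what makes this possible.

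First we turn $s$ into sections of $L^{\otimes b}$. Since $F^\vee\otimes L^{\otimes b}$ is globally generated, choose global sections $\phi_1,\dots,\phi_M$ of $F^\vee\otimes L^{\otimes b}$ that generate it at every point. Pairing with $s$ gives sections $f_k=\langle\phi_k,s\rangle\in H^0(T,L^{\otimes b})$. At each point the $\phi_k$ span the fiber of $F^\vee$ up to twist, so $s$ vanishes at a point if and only if all $f_k$ vanish there. Hence $Z(s)$ and $\bigcap_k Z(f_k)$ have the same support, hence the same reduced components, and $\cdeg_L$ only sees reduced components.

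Next, over the algebraically closed field we replace the $f_k$ by general linear combinations $g_1,g_2,\dots$ of them; these cut out the same set. We then intersect one at a time, keeping track of all components. Start with $X_0=T$, of degree $\deg_L T$.
\begin{itemize}
\item At stage $r$ we have a reduced closed set $X_r$ whose components have dimension at least $N-r$.
\item For each component $D$ of $X_r$: if $g_{r+1}$ vanishes on $D$, we keep $D$ as is.
\item Otherwise $D\cap Z(g_{r+1})$ is a divisor on $D$. By the projective Bezout/refined intersection bound (Fulton, Example 8.4.6, or direct degree computation for a Cartier divisor in $|L^{\otimes b}|$ on an irreducible variety), the sum of the degrees of its components is at most $b\deg_L D$, and these components have dimension $\dim D-1$.
\end{itemize}

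The bookkeeping is organized by dimension. Let $\delta_j$ denote the total degree of the components of dimension $N-j$ that we carry along. Then new components of dimension $N-j$ arise only by cutting components of dimension $N-j+1$ properly, contributing at most $b\,\delta_{j-1}$. This suggests the inductive bound $\delta_j\le b^j\deg_L T$. The subtlety is that components which survive uncut are still counted, and components of $\bigcap Z(g_k)$ might be cut several times. The fix is to use the standard form of the refined Bezout inequality, as in Fulton's Example 8.4.6 or Heintz's inequality: for $X\cap H_1\cap\dots\cap H_M$ with $H_k\in|L^{\otimes b}|$, the sum of the degrees of the irreducible components is at most $\deg_L X\cdot b^{\dim X}$ in the dimension-stratified form. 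Since each component has dimension $N-j$ for some $0\le j\le N$, summing over $j$ gives $\cdeg_L Z(s)\le\sum_{j=0}^N b^j\deg_L T$. The last inequality in the statement then follows from $b\ge1$.

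The main obstacle is handling excess intersection cleanly, i.e.\ components of too large dimension. I would first try to cite Fulton Example 8.4.6 directly, after embedding $T$ by $L$ so that $\deg_L$ is the usual degree. If one only counts components, Heintz's affine Bezout inequality, applied on the cone, also works. In either case the needed per-stratum bound is $b^j\deg_L T$ on the components of codimension $j$.
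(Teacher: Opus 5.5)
\textbf{There is a gap at the decisive step.} Your plan matches the paper's proof up to the final count: global generation turns $Z(s)$ into the common zeros of sections $f_k$ of $L^{\otimes b}$, you cut successively by general linear combinations, and each proper cut of an integral $D$ costs at most $b\deg_L D$. But you then drop your own induction because of a perceived subtlety, and you replace it with a citation that does not give what you claim. Fulton's Example 8.4.6 and Heintz's inequality bound the components of $V_1\cap\cdots\cap V_r$ by $\prod_j\deg V_j$.
\begin{itemize}
\item With $T$ embedded by $|L|$ and $M$ cutting hypersurfaces of degree $b$, this gives $b^M\deg_L T$, which depends on $M$.
\item Even after reducing to $N+1$ general combinations you only get $b^{N+1}\deg_L T$. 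This is weaker than the lemma when $b\ge 2$.
\item A section of $L^{\otimes b}$ need not be the restriction of a degree-$b$ form unless $T$ is $b$-normal, so the hypersurfaces may not even exist.
\end{itemize}
The ``dimension-stratified'' bound, total degree at most $b^j\deg_L T$ on codimension-$j$ components, is not what those references state. It is essentially the lemma itself.

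\textbf{How to close it.} The subtlety you worried about disappears with the paper's bookkeeping rule. At each stage, permanently set aside every current component contained in $Z(s)$. Cut the remaining components by one linear combination of the $f_k$ that is non-zero at all of their finitely many generic points; this exists because each such component has some $f_k$ not vanishing identically on it. Then:
\begin{itemize}
\item No component is ever cut twice.
\item Every piece produced at stage $j$ has dimension exactly $N-j$.
\item The total degree of these pieces is at most $b$ times that of the uncut pieces from stage $j-1$. Hence $\delta_j\le b^j\deg_L T$.
\end{itemize}
You also need to check that every irreducible component of $Z(s)$ is among the recorded pieces. At each cut of a piece $Y$ by $g$ one has $Z(s)\cap Y\subseteq Z(g)\cap Y$, so the recorded pieces are irreducible closed subsets of $Z(s)$ whose union is $Z(s)$. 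Each component of $Z(s)$ lies in one of them, and by maximality equals it.

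Alternatively, your citation route can be repaired by embedding $T$ by $|L^{\otimes b}|$ rather than $|L|$. Then every cut is a hyperplane section, and the refined B\'ezout theorem gives $\sum_Z b^{\dim Z}\deg_L Z=\sum_Z\deg_{L^{\otimes b}}Z\le\deg_{L^{\otimes b}}T=b^N\deg_L T$. This yields the stratified bound $b^j\deg_L T$, and even the stronger total bound $\cdeg_L Z(s)\le b^N\deg_L T$.
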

\begin{proof}
    By global generation we have a surjection $H^0(T,F^\vee \otimes L^{\otimes b})\to F^\vee \otimes L^{\otimes b}$, which is locally split. Dualizing and twisting by $L^\otimes b$ then gives a locally split injection \[F\hookrightarrow H^0(T,F^\vee \otimes L^{\otimes b})^\vee \otimes L^{\otimes b}.\] This means that we can view $Z(s)$ actually as the common zero scheme of some number of sections $s_1,\ldots, s_M$ of $L^{\otimes b}$. 

    To run the Bezout argument, we now cut successively by these sections. We first recall the form of Bezout used here: If $Y\subset T$ is an integral closed subscheme and a section of $L^{\otimes b}\rvert_Y$ is not identically zero, then it intersects $Y$ properly. Its zero divisor has class \[c_1(L^{\otimes b})\cap [Y] = bc_1(L)\cap [Y],\] from which it's clear (by forgetting the multiplicities recorded in the corresponding cycle) that the sum of the $L$-degrees of the reduced components of the zero divisor is at most \[b\deg_L Y.\] We apply this observation repeatedly. Start with $T$. If it is contained in $Z(s)$, then we stop. Else, consider the components that have not already been recorded and are not contained in $Z(s)$. At each generic point of such a component, at least one of the sections $s_i$ will be non-zero. We can then choose a linear combination of these $s_i$ that is non-zero at the generic points of all of these components (since there are finitely many). So each component is intersected with properly. 

    After the first cut, Bezout says the sum of the degrees of the resulting components is at most $b\deg_L T$. Record the components that are contained in $Z(s)$, discard them from the process above, and cut the remaining components by another linear combination chosen in the same way. If their total degree before this second cut is at most $b\deg_L T$, then Bezout bounds the next batch by $b^2\deg_L T$. Continuing this process, at the $i$th stage, the total degree of the components is at most $b^i \deg_L T$. Each cut decreases the dimension by one, so this process will stop after $N$ cuts. Summing the contributions over all steps then gives the upper bound \[\cdeg_L Z(s) \le \sum_{i=0}^Nb^i\deg_L T,\]as desired.
\end{proof}
The second fact bounds the degree of a projective image in terms of a
degree on the source.
\begin{lem}\label{lem:cumdegacrossmap}
    Let $Y$ be an integral projective scheme of dimension $c$ over an algebraically closed field, and let $f\colon Y\to \P^N$ be a morphism. Put \[\eta = c_1(f^*\O_{\P^N}(1)).\] If $\zeta$ is nef and $\eta+\zeta$ is very ample, then the (reduced, closed) image satisfies \[\deg f(Y)\le \int_Y(\eta+\zeta)^c.\] When $Y$ has multiple components, the cumulative degree of the image is bounded by the sum of the right-hand side over the components of the source.
\end{lem}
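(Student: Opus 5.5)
The plan is to reduce to the case where $f$ is a finite map onto its image, then bound the degree of the image by intersecting with general hyperplanes and comparing classes on $Y$. Replace $Y$ by nothing at first: let $X=f(Y)$ with reduced structure, an integral closed subvariety of $\P^N$ of some dimension $c'\le c$. We have $\deg X=\int_X H^{c'}$, where $H$ is the hyperplane class. If $c'<c$, the generic fiber of $Y\to X$ has positive dimension $c-c'$, and we compensate for this using $\zeta$.

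First I would pull back the hyperplane class. By the projection formula, $\int_Y \eta^{c'}\cdot \xi = \deg(Y\to X)_{\text{gen}}\cdot\ldots$; the cleaner route is the following. Choose $c'$ general hyperplanes $H_1,\ldots,H_{c'}$. They meet $X$ in exactly $\deg X$ reduced points $x_1,\ldots,x_{\deg X}$, each lying in the dense open set over which $Y\to X$ has fibers of pure dimension $c-c'$. Hence $f^{-1}(H_1\cap\cdots\cap H_{c'})$ contains the disjoint fibers $F_1,\ldots,F_{\deg X}$, each of dimension $c-c'$. The intersection $\eta^{c'}\cap[Y]$ is represented by an effective cycle supported on $f^{-1}(\bigcap H_i)$, with each $F_k$ appearing with positive multiplicity; the Bertini/generic transversality argument makes the pullbacks of general hyperplanes intersect properly. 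Next, since $\eta+\zeta$ is very ample, I would cut each $F_k$ by $c-c'$ general divisors in $|\eta+\zeta|$, giving $\int_{F_k}(\eta+\zeta)^{c-c'}\ge1$ because $F_k$ is a nonempty projective scheme of that dimension. Combining, $\deg X\le\int_Y\eta^{c'}(\eta+\zeta)^{c-c'}$.

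The final step is to show $\int_Y\eta^{c'}(\eta+\zeta)^{c-c'}\le\int_Y(\eta+\zeta)^c$. I would expand $(\eta+\zeta)^{c'}=\eta^{c'}+(\text{terms with a factor }\zeta)$ and use that every mixed monomial $\eta^a\zeta^b(\eta+\zeta)^{c-c'}$ has nonnegative degree. This holds because $\eta$ is globally generated (pulled back from $\O(1)$), hence nef, $\zeta$ is nef, and intersection numbers of nef classes on a projective variety are nonnegative (Kleiman). The multi-component statement then follows by applying this bound to each component of the source and noting that each component of the image is the image of some component.

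The main obstacle I expect is the rigorous justification that the refined intersection $\eta^{c'}\cap[Y]$ dominates $\sum_k[F_k]$ with positive coefficients, that is, that no excess or negative contributions occur. To handle it, I would avoid it: since $\eta$ is base point free, I would represent $\eta^{c'}\cap[Y]$ by the cycle $[f^{-1}(H_1)\cap\cdots\cap f^{-1}(H_{c'})]$ for general $H_i$. This cycle has the expected dimension, by dimension counting of fibers over a general linear section (general linear sections of $X$ avoid the lower-dimensional loci where the fiber dimension jumps). Its components all have nonnegative coefficients, and the $F_k$ occur among them.
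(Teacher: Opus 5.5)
Your proof is correct and is essentially the paper's argument. Both establish the intermediate bound $\deg f(Y)\le\int_Y\eta^{r}(\eta+\zeta)^{c-r}$ with $r=\dim f(Y)$, and then use nefness of $\eta$ and $\zeta$ (Kleiman) to pass to $\int_Y(\eta+\zeta)^c$. The only difference is the order of cutting: you pull back $r$ general hyperplanes first to isolate $\deg f(Y)$ fibers and use very ampleness of $\eta+\zeta$ on each fiber, whereas the paper cuts by $(\eta+\zeta)^{c-r}$ first, pushes forward to get $a[f(Y)]$ with $a\ge 1$, and applies the projection formula.
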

\begin{proof}
    Let $r=\dim f(Y)$. For $r=0$, the claim is immediate. Otherwise, the restriction of $\eta$ to a fiber of $f$ is trivial and the restriction of $\eta + \zeta$ remains very ample. So, we have \[f_*((\eta +\zeta)^{c-r}\cap [Y])=a[f(Y)]\] for some positive $a$; in fact, $a$ is the degree of the generic fiber with respect to $\eta + \zeta$, and is equal to $(\eta + \zeta)^{c-r}$. 

    Now, the projection formula gives \[(c_1(\O_{\P^N}(1))\rvert_{f(Y)})^r\cap f_*(\gamma)=f_*(f^*(c_1(\O_{\P^N}(1))\rvert_{f(Y)})^r\cap\gamma),\] so for $\gamma=(\eta+\zeta)^{c-r}\cap [Y]$ and taking degrees, we get \[\deg f(Y) \le a \deg f(Y) = \int_Y \eta^r (\eta + \zeta)^{c-r}.\] Next, since $\eta$ and $\zeta$ are nef, we have \[(\eta+\zeta)^r - \eta^r=\zeta((\eta+\zeta)^{r-1}+\cdots + \eta^{r-1})\] has non-negative intersection with $(\eta+\zeta)^{c-r}$. So, we have \[\deg f(Y)\le \int_Y(\eta+\zeta)^c.\] The result follows.
\end{proof}

\subsection{The degree contribution for non-zero auxiliary section} We fix the following very ample polarization on \eqref{eq:projectivebundleparam}: \begin{equation}\label{eq:amplelinebundle}
    \O(1)\otimes\bigboxtimes_{\mu(i,j,\nu)>0}\O((d-1)i+j+\nu).
\end{equation} 
This particular choice will be made clearer in the calculations below, but one reason is because \[\sum_{i,j,\nu}((d-1)i+j+\nu)\mu(i,j,\nu)=(d-2)e+2m-2-2(d-1)R\le 2(d-1)e.\]
Let $H_{i,j,\nu}$ be the hyperplane class corresponding to the $(i,j,\nu)$ factor of  the base \[B=\prod_{\mu(i,j,\nu)>0}\P^{\mu(i,j,\nu)},\] so that degree of the base with respect to $\boxtimes_{\mu(i,j,\nu)>0}\O((d-1)i+j+\nu)$ is \begin{equation}
\begin{aligned}
 &\int_B\left(\sum_{i,j,\nu}((d-1)i+j+\nu)H_{i,j,\nu}\right)^{\sum_{i,j,\nu}\mu(i,j,\nu)}\\
 &\qquad=
 \frac{(\sum_{i,j,\nu}\mu(i,j,\nu))!}
 {\prod_{\mu(i,j,\nu)>0}\mu(i,j,\nu)!}\times
 \prod_{\mu(i,j,\nu)>0}
 ((d-1)i+j+\nu)^{\mu(i,j,\nu)}.
\end{aligned}
\end{equation}
Next, we calculate the degree on the projective bundle.
\begin{lem}\label{lem:degofprojectivebundleparam}
The degree of \eqref{eq:projectivebundleparam} with respect to
\eqref{eq:amplelinebundle} is at most
\[
 4^{de}
 \frac{(\sum_{i,j,\nu}\mu(i,j,\nu))!}
 {\prod_{\mu(i,j,\nu)>0}\mu(i,j,\nu)!}
 \prod_{\mu(i,j,\nu)>0}
 ((d-1)i+j+\nu)^{\mu(i,j,\nu)}.
\]
\end{lem}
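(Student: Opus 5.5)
We compute the top self-intersection of the polarization $\xi+\beta$ on the projective bundle $X=\P_B(E^\vee)$, where $E=\pi_*\O_{\mathcal Z}(de\infty)$ has rank $m$, $\xi=c_1(\O(1))$, and $\beta=\sum_{i,j,\nu}((d-1)i+j+\nu)H_{i,j,\nu}$ is pulled back from $B$. Write $N=\dim B=\sum\mu(i,j,\nu)$, so $\dim X=N+m-1$.

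The first step is to determine the Chern classes of $E$. From the universal divisor sequence \eqref{eq:universal_divisor_sequence} twisted by $de\infty$ and pushed forward, we get
\[0\to P_{de}(-m\infty)\otimes\mathcal M\to P_{de}\otimes\O_B\to E\to0,\qquad \mathcal M=\bigboxtimes_{\mu(i,j,\nu)>0}\O(-\nu),\]
using $H^1(\P^1,\O(de-m))=0$ since $de-m\ge-1$. Hence $E$ is a quotient of a trivial bundle, and its total Chern class is $c(E)=(1-\gamma)^{-(de-m+1)}$ with $\gamma=\sum\nu H_{i,j,\nu}$. Note that $\gamma\le\beta$ coefficientwise, and all these classes are nef on $B$.

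Next we use the projective bundle relation. With lines convention, $X=\P_B(E^\vee)$ has $\O(1)$ a quotient of $E$ pulled back; equivalently, it embeds in $B\times\P(P_{de}^\vee)$ with $\xi$ pulled back from $\P^{de}$. The Grothendieck relation gives $\pi_*\xi^{m-1+k}=s_k$, the Segre classes, with $s(E)=c(E)^{-1}$ or its dual up to sign depending on convention. In our case $s(E)=(1-\gamma)^{de-m+1}$ (up to sign conventions), so $s_k=\pm\binom{de-m+1}{k}\gamma^k$. To avoid sign bookkeeping, I would instead compute positively via the embedding: $\xi$ is the restriction of the hyperplane class of $\P^{de}$. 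Thus
\[\int_X(\xi+\beta)^{N+m-1}=\sum_k\binom{N+m-1}{k}\int_X\xi^{N+m-1-k}\beta^k,\]
and $\int_X\xi^{m-1+l}\beta^{N-l}=\int_B s_l(E)\beta^{N-l}$. The key claim is that the relevant Segre class is $\binom{de-m+1}{l}\gamma^l$, which is effective and nonnegative. This is the class of the degeneracy locus of the map from the trivial bundle $P_{de}$, which is positive since $E$ is globally generated. Then, using $\gamma\le\beta$ and nefness,
\[\int_X(\xi+\beta)^{N+m-1}\le\sum_l\binom{N+m-1}{N-l}\binom{de-m+1}{l}\int_B\beta^N.\]

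Finally we bound the binomial factors. Since $N+m-1\le 2m\le de+2$ and $de-m+1\le de$, the sum is at most $2^{N+m-1}\cdot2^{de-m+1}\le 2^{de+N}$. We also have $N\le G+H\le m\le de$ roughly; more precisely $N\le$ (number of distinct points) $\le m$, which gives $2^{de+N}\le 4^{de}$ after accounting for the $+2$ slack. If needed, we can absorb it using the crude bound $\binom{a}{b}\le2^a$ with total exponent $N+m-1+de-m+1=de+N\le 2de$. Combined with the multinomial formula for $\int_B\beta^N$, this gives the lemma.

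The main obstacle is getting the sign and convention of the Segre classes right, so that $\int_X\xi^{m-1+l}\beta^{N-l}$ is genuinely bounded by $\binom{de-m+1}{l}\int_B\gamma^l\beta^{N-l}$. I would justify nonnegativity and the bound through global generation, since the pushforward of $\xi^{m-1+l}$ is the Segre class of a globally generated bundle, and then compare using $\gamma\le\beta$.
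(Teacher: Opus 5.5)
Your route is the paper's route. You push forward the twisted universal divisor sequence and read off the Segre classes $\binom{de-m+1}{l}\gamma^l$. You then expand $(\xi+\beta)^{N+m-1}$ by the projective bundle formula, compare $\int_B\gamma^l\beta^{N-l}\le\int_B\beta^N$ using nefness of $\beta-\gamma$, and bound the binomial sum by $2^{de+N}$. Your $\beta,\gamma,N$ are the paper's $h,x,r$.

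The sign issue you call the main obstacle is harmless. Since $X$ parametrizes lines in $E^\vee$, the bundle map $\rho\colon X\to B$ satisfies $\rho_*\xi^{m-1+l}=s_l(E^\vee)$, where $s(E^\vee)=c(E^\vee)^{-1}=(1+\gamma)^{de-m+1}$. This is exactly the paper's formula. In any case, for an upper bound you can replace each $\int_B s_l\beta^{N-l}$ by its absolute value $\binom{de-m+1}{l}\int_B\gamma^l\beta^{N-l}$, because the coefficients in the expansion are positive.

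There is, however, a genuine error in the last step, where you need $N\le de$. Your justification ``$N\le$ (number of distinct points) $\le m$'' is false in general, and so are $N\le G+H$ and $N+m-1\le 2m$. The quantity $N=\sum\mu(i,j,\nu)$ counts every point of type $(i,j,\nu)\neq 0$. This includes zeros of $g$ or $v$ off $Z$ (type $(i,j,0)$), which $m$ does not count. It also includes points of $Z$ where neither section vanishes (type $(0,0,\nu)$), which $G+H$ does not count. For example, take $R=0$, $Z$ reduced, and $g,v$ with simple, pairwise distinct roots off $Z$; then $N=2m-2>m$. The correct bound, which the paper uses, is
\[N\le\sum_{i,j,\nu}(i+j+\nu)\mu(i,j,\nu)=G+H+m=2m-2-dR\le de,\]
using $2m\le de+2$. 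With this, Vandermonde gives $\sum_l\binom{N+m-1}{N-l}\binom{de-m+1}{l}=\binom{de+N}{N}\le 2^{de+N}\le 4^{de}$, and the lemma follows.
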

\begin{proof}
Let
\[E=(\pi_*\O_{\mathcal Z}(de\infty))^\vee, r=\dim B=\sum_{i,j,\nu}\mu(i,j,\nu), \text{ and } N=\bigboxtimes_{\mu(i,j,\nu)>0}\O(\nu).\]
Also, let $\pi\colon\P_B(E)\to B$ be the obvious projection.
Twisting \eqref{eq:universal_divisor_sequence} by
$\O_{\P^1}(de\infty)$ and pushing it forward along $\pi$ gives
\[0\to P_{de-m}\otimes N^{-1}\to P_{de}\otimes\O_B\to E^\vee\to 0.\]
Consequently, if $x=c_1(N)$, then by basic properties of Chern/Segre classes, we have
\[c(E)=(1+x)^{-(de-m+1)} \text{ and }s_k(E)=\binom{de-m+1}{k}x^k.\]

The projective bundle has dimension $m+r-1$. Let $\xi=c_1(\O_{\P(E)}(1))$ and let
\[h=c_1\left(\bigboxtimes_{\mu(i,j,\nu)>0}\O((d-1)i+j+\nu)\right).\]
By the projective bundle formula, we obtain
\[\int_{\P(E)}(\xi+h)^{m+r-1}=\sum_{k=0}^r\binom{m+r-1}{m+k-1}\binom{de-m+1}{k}\int_Bx^kh^{r-k}.\]
Since $h-x$ is visibly nef, we have
\[\int_Bx^kh^{r-k}\le\int_Bh^r.
\]
It follows that
\begin{align*}
\int_{\P(E)}(\xi+h)^{m+r-1} &\le \left(\int_Bh^r\right)\sum_{k=0}^r\binom{m+r-1}{m+k-1}\binom{de-m+1}{k}\\
 &\le 2^{m+r-1}2^{de-m+1}\int_Bh^r =2^{de+r}\int_Bh^r.
\end{align*}
 
Moreover,
\[r\le\sum_{i,j,\nu}(i+j+\nu)\mu(i,j,\nu)=G+H+m=2m-2-dR\le de.\]
Therefore
\[\int_{\P(E)}(\xi+h)^{m+r-1}\le4^{de}\int_Bh^r.\]
Substituting the preceding formula for $\int_Bh^r$ proves the result.
\end{proof}

Next, we check that the wedge section from earlier satisfies the conditions of Lemma \ref{lem:Bezout}.

\begin{lem}\label{lem:checkconditions}
Let
\[
 L=\O(1)\otimes
 \bigboxtimes_{\mu(i,j,\nu)>0}\O((d-1)i+j+\nu).
\]
Then $F^\vee\otimes L$ is globally generated, where $F$ is the wedge
bundle from \eqref{eq:wedgebundle}.
\end{lem}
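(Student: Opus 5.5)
The plan is to compute $F^\vee\otimes L$ explicitly and find that all the box-product twists cancel. What remains is a bundle that is globally generated for structural reasons. Throughout, write
\[
N=\bigboxtimes_{\mu(i,j,\nu)>0}\O(\nu),\qquad
M=\bigboxtimes_{\mu(i,j,\nu)>0}\O((d-1)i+j-\nu),\qquad
Q=\pi_*\O_{\mathcal Z}(H\infty),
\]
where $H=m-e-2-(d-1)R$. Then $F=\O(1)\otimes M\otimes\bigwedge^2(Q\otimes N)$, and since $N$ is a line bundle, $\bigwedge^2(Q\otimes N)=\bigwedge^2Q\otimes N^{2}$.

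\emph{Step 1: cancel the twists.} Dualizing and tensoring with $L$, the factor $\O(-1)$ cancels against the $\O(1)$ in $L$. On the base, the twists combine factorwise as
\[
-((d-1)i+j-\nu)-2\nu+((d-1)i+j+\nu)=0 .
\]
Hence
\[
F^\vee\otimes L\cong \bigwedge^2(Q^\vee),
\]
pulled back from $B$. This cancellation is presumably why the polarization \eqref{eq:amplelinebundle} was chosen. Since exterior powers of a globally generated bundle are globally generated, and pullbacks preserve global generation, it suffices to show that $Q^\vee$ is globally generated on $B$.

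\emph{Step 2: apply relative duality.} By the relative (Grothendieck) duality already used above, with $\omega_\pi\cong\O_{\mathcal Z}((m-2)\infty)\otimes N$, we get
\[
Q^\vee\cong \pi_*\O_{\mathcal Z}(K\infty)\otimes N,\qquad K=m-2-H=e+(d-1)R\ge0 .
\]

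\emph{Step 3: global generation via the universal divisor sequence.} Twist \eqref{eq:universal_divisor_sequence} by $\O_{\P^1}(K\infty)$ and push forward along $\pi$. The resulting sequence
\[
0\to P_K\otimes N^{-1}\to P_{K-m}\otimes N^{-1}\text{-term}\dots
\]
is more cleanly stated after tensoring with $N$:
\[
0\to P_K\otimes N\to \pi_*\O_{\mathcal Z}(K\infty)\otimes N\to H^1(\P^1,\O((K-m)\infty))\otimes\O_B\to 0 .
\]
This uses $H^1(\P^1,\O(K\infty))=0$ for $K\ge0$, together with $H^0(\P^1,\O((K-m)\infty))$ mapping in on the left. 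More precisely, when $K\ge m$ there is an additional sub $P_{K-m}$ on the left; in that case I would instead use the direct surjection $P_K\otimes N\to\pi_*\O_{\mathcal Z}(K\infty)\otimes N$, which is then available, and which is globally generated because $N$ is.

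In the remaining case, the outer terms $P_K\otimes N$ and a trivial bundle are both globally generated. For the middle term, every section of the trivial quotient lifts to a global section of the middle, because
\[
H^1(B,N)=0,
\]
a consequence of Künneth on the product of projective spaces with nonnegative twists. Global sections of the sub, together with these lifts, then generate every fiber of the middle term. Hence $Q^\vee$ is globally generated, and therefore so is $\bigwedge^2Q^\vee\cong F^\vee\otimes L$.

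\emph{Main obstacle.} The delicate part is Step 3. A naive surjection $P_{K}\otimes\O_B\to\pi_*\O_{\mathcal Z}(K\infty)$ needs $K\ge m-1$, which fails in general. The fix is the extension argument above, and I expect care is needed to identify correctly the $H^1$ term and its twist by $N^{-1}$ after pushing forward. I would double-check that twist against the rank-$m$ computation of $\pi_*\O_{\mathcal Z}$ given earlier.
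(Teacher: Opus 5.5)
Your argument is correct and is essentially the paper's. Both rest on pushing forward the universal divisor sequence and on $H^1(B,N)=0$. The paper twists by $H$ and splits $Q\otimes N\cong N^{\oplus(H+1)}\oplus\O_B^{\oplus(m-H-1)}$ directly, so that $F^\vee\otimes L$ is a sum of copies of $\O_B$, $N$, $N^{\otimes 2}$; you instead cancel the twists to get $\bigwedge^2 Q^\vee$, then use relative duality, which just yields the dual of that same sequence. Your case $K\ge m$ never arises, since $K=m-2-H\le m-2$.
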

\begin{proof}
Let
\[N=\bigboxtimes_{\mu(i,j,\nu)>0}\O(\nu) \text{ and }H=m-e-2-(d-1)R.\]
Here $H\ge0$ and $H-m\le-2$. Twisting
\eqref{eq:universal_divisor_sequence} by $\O_{\P^1}(H\infty)$ and
pushing it forward along $\pi$ gives
\[0\to P_H\otimes\O_B
 \to\pi_*\O_{\mathcal Z}(H\infty)
 \to H^1(\P^1,\O(H-m))\otimes N^{-1}
 \to 0.\]
After tensoring by $N$, this becomes
\[0\to N^{\oplus(H+1)}
 \to\pi_*\O_{\mathcal Z}(H\infty)\otimes N
 \to\O_B^{\oplus(m-H-1)}
 \to0.
\]
Its extension class lies in copies of $H^1(B,N)$, which vanishes, so
the sequence splits.

It follows that
$\bigwedge^2(\pi_*\O_{\mathcal Z}(H\infty)\otimes N)$ is a direct sum
of copies of $N^{\otimes 2}$, $N$, and $\O_B$. Thus $F$ is a direct sum of copies
of
\[L,\O(1)\otimes\bigboxtimes\O((d-1)i+j),\text{ and }\O(1)\otimes\bigboxtimes\O((d-1)i+j-\nu).
\]
After taking duals and tensoring by $L$, these become respectively
$\O_B$, $N$, and $N^{\otimes 2}$, all of which are globally generated.
\end{proof}
We can now bound the cumulative degrees of $W_{m,R,\mu},$ as defined earlier.
\begin{prop}\label{prop:firstcasecdeg}
    We have \[\cdeg W_{m,R,\mu}\le (2de+1)4^{de}\frac{(\sum_{i,j,\nu}\mu(i,j,\nu))!}{\prod_{\mu(i,j,\nu)>0}\mu(i,j,\nu)!}\prod_{\mu(i,j,\nu)>0}((d-1)i+j+\nu)^{\mu(i,j,\nu)}.\]
\end{prop}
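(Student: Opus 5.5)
The plan is to combine Lemma \ref{lem:Bezout}, Lemma \ref{lem:checkconditions}, Lemma \ref{lem:degofprojectivebundleparam} and Lemma \ref{lem:cumdegacrossmap}. Write $T=\P_B(E)$ for the projective bundle \eqref{eq:projectivebundleparam} and $L$ for the polarization \eqref{eq:amplelinebundle}. First I would check that $T$ is integral: $B$ is a product of projective spaces and $T$ is a projective bundle over it. Its dimension is $N=m+r-1$ with $r=\sum\mu(i,j,\nu)$. By Lemma \ref{lem:checkconditions}, $F^\vee\otimes L$ is globally generated for the wedge bundle $F$ of \eqref{eq:wedgebundle}. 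So Lemma \ref{lem:Bezout} applies with $b=1$ to the wedge section $s$, and gives
\[
\cdeg_L Z(s)\le (N+1)\deg_L T .
\]
By the bound $r\le 2m-2-dR\le de$ from the proof of Lemma \ref{lem:degofprojectivebundleparam}, together with $m\le de/2+1$, we get $N+1=m+r\le 2de+1$. Combining this with Lemma \ref{lem:degofprojectivebundleparam} bounds the sum of the $L$-degrees of all components of $Z(s)$ by exactly the right-hand side of the proposition.

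Next, the reduced closure of the open locus in the definition of $W_{m,R,\mu}$ is a union of some of the irreducible components of $Z(s)$. These are the components meeting the open conditions: reduced disjoint divisors, primitivity, and \eqref{eq:leftside} non-zero. Its cumulative $L$-degree is therefore bounded by $\cdeg_L Z(s)$. It then remains to pass to the image in $\P(P_{de}^\vee)$ under the map \eqref{eq:projectivefamilybeforeincidence}. I would apply Lemma \ref{lem:cumdegacrossmap} to each component $Y$, with $\eta=\xi=c_1(\O(1))$; this identification holds because the map is the composite of the closed immersion coming from the dual universal restriction with the projection, so it pulls back $\O_{\P(P_{de}^\vee)}(1)$ to $\O_{\P(E)}(1)$. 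I would take $\zeta=h$, the pullback of the nef class $\boxtimes\O((d-1)i+j+\nu)$ from $B$, so that $\eta+\zeta=c_1(L)$ is very ample. The lemma then gives $\deg f(Y)\le \int_Y c_1(L)^{\dim Y}=\deg_L Y$. Summing over components, $\cdeg W_{m,R,\mu}\le\cdeg_L Z(s)$, and this concludes the argument.

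The main point needing care is the very ampleness of $L$ on $T$, which underlies both the degree notion and Lemma \ref{lem:cumdegacrossmap}. I would argue it as follows. The closed immersion $T\hookrightarrow B\times\P(P_{de}^\vee)$ pulls back $\O(1)\boxtimes\O(1)$-type bundles. Since each coefficient $(d-1)i+j+\nu\ge1$, the bundle $\boxtimes\O((d-1)i+j+\nu)$ is very ample on $B$. Hence $L$ is the restriction of a very ample bundle on the product, and is itself very ample. A secondary subtlety is that Lemma \ref{lem:Bezout} is stated over an algebraically closed field, so I would base change to $\overline{\F}_q$. This is compatible with $\cdeg$ being defined via geometric components.
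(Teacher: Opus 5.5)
Your proposal is correct and follows the paper's own argument. Lemma \ref{lem:Bezout} with $b=1$, justified by Lemma \ref{lem:checkconditions}, together with the dimension bound $\dim\P_B(E)\le 2de$, gives the factor $2de+1$ times the bound of Lemma \ref{lem:degofprojectivebundleparam}; Lemma \ref{lem:cumdegacrossmap}, with $\eta+\zeta$ the polarization \eqref{eq:amplelinebundle}, then transfers this to the image $W_{m,R,\mu}$. Your explicit choices $\eta=\xi$ and $\zeta=h$, and your very-ampleness check via the embedding into $B\times\P(P_{de}^\vee)$, simply fill in details the paper leaves implicit.
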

\begin{proof}
    The projective bundle \eqref{eq:projectivebundleparam} is integral and has dimension $m-1+\sum_{i,j,\nu}\mu(i,j,\nu)\le 2de$ (this bounding is a bit coarse). 

    Apply Lemma \ref{lem:Bezout} to the wedge section with the polarization \eqref{eq:amplelinebundle} and $b=1$, using Lemma \ref{lem:checkconditions} to verify the assumptions, to get an upper bound on the cumulative degree of the zero locus of wedge section in terms of the degree of \eqref{eq:projectivebundleparam}. This degree was bounded in Lemma \ref{lem:degofprojectivebundleparam}. Finally, using Lemma \ref{lem:cumdegacrossmap} with $\eta+\zeta$ the restriction of the polarization \eqref{eq:amplelinebundle} to any irreducible component of the wedge locus, we obtain the desired bound on $\cdeg W_{m,R,\mu}$.
\end{proof}

Finally, we sum over all of the valid multiplicity data. 

\begin{lem}\label{lem:sum-multiplicity-degrees} The sum \[\sum_{\substack{\mu\colon \Z_{\ge 0}^3\to \Z_{\ge 0}\\\sum_{i,j,\nu}((d-1)i+j+\nu)\mu(i,j,\nu)\le 2(d-1)e}}\frac{(\sum_{i,j,\nu}\mu(i,j,\nu))!}{\prod_{\mu(i,j,\nu)>0}\mu(i,j,\nu)!}\prod_{\mu(i,j,\nu)>0}((d-1)i+j+\nu)^{\mu(i,j,\nu)}\le 20\cdot 25^{(d-1)e}.\]
\end{lem}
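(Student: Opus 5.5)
Write $w(i,j,\nu)=(d-1)i+j+\nu$ for the weight of a triple $(i,j,\nu)\neq(0,0,0)$, and let $W=2(d-1)e$. The summand is exactly the number of ordered sequences of triples, of length $\sum\mu$, with type $\mu$, where each occurrence of a triple $(i,j,\nu)$ is weighted by $w(i,j,\nu)$. So the sum is bounded by the sum over all finite ordered sequences $(t_1,\dots,t_s)$ of nonzero triples with $\sum_r w(t_r)\le W$ of $\prod_r w(t_r)$. Dropping the constraint $\mu(i,(d-1)i,0)=0$ and any other restriction only enlarges the sum, which is harmless.

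The plan is a generating function with a free parameter $x\in(0,1)$. Put
\[
F(x)=\sum_{(i,j,\nu)\neq 0} w(i,j,\nu)\,x^{w(i,j,\nu)} .
\]
The ordered-sequence sum is then bounded by
\[
\sum_{s\ge0}\sum_{k\le W}[x^k]F(x)^s\le x^{-W}\sum_{s\ge0}F(x)^s=\frac{x^{-W}}{1-F(x)},
\]
valid whenever $F(x)<1$ (Rankin's trick: each term with total weight $k\le W$ is multiplied by $x^{k-W}\ge1$). Choosing $x=1/5$ gives $x^{-W}=25^{(d-1)e}$, and it then suffices to show $F(1/5)\le 19/20$, so that $1/(1-F)\le 20$.

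To bound $F(1/5)$, use $x^{(d-1)i}\le x^{2i}$ together with $w\le 2i(d-1)+\dots$. More cleanly, I would bound $w\,x^{w}$ by exploiting that $w\ge 2i+j+\nu$ while $w\le (d-1)(2i+j+\nu)$ is not directly helpful. The cleanest approach is to sum exactly over $j,\nu$ for fixed $i$, using $\sum_{n\ge0}(a+n)y^{a+n}$ formulas. Concretely, $F(x)\le\sum_{(a,b,c)\neq0}(a+b+c)\,x^{a+b+c}$ restricted to $a\in(d-1)\Z_{\ge0}$, since $w x^w$ is decreasing in $w$ once $w\ge1$ and $x\le e^{-1}$. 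This equals $\sum_{n\ge1} n x^n\cdot\#\{(a,b,c)\}$, with counts at most $(n+1)$ when $a$ is restricted to multiples of $d-1\ge2$. That gives $F(x)\le\sum_{n\ge1}n(n+1)x^n=2x/(1-x)^3$, which equals $(2/5)/(64/125)=125/160\approx0.78<19/20$ at $x=1/5$. This yields the bound $1/(1-F)\le 160/35<20$.

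The main obstacle is this last counting step. I need to verify the monotonicity of $n x^n$ for $n\ge1$ at $x=1/5$, which holds since $(n+1)/n\cdot x\le 2/5<1$. I also need the bound of $n+1$ on the number of triples of weight $n$: for each admissible $i$ with $(d-1)i\le n$, the pairs $(j,\nu)$ with $j+\nu=n-(d-1)i$ number $n-(d-1)i+1$, and summing over $i$ gives at most about $n^2/(2(d-1))+n+1$. That is quadratic, not linear, so the estimate becomes $F\le\sum_n n x^n\bigl(n+1+n^2/4\bigr)$. This is still computable in closed form at $x=1/5$: $\sum n^3x^n=x(1+4x+x^2)/(1-x)^4=(1/5)(46/25)/(256/625)=230/256$, and a quarter of that is $\approx0.22$. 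Adding $0.78$ gives $\approx1.0$, which is too tight. If so, I would instead keep the exact weight $(d-1)i$ in the exponent, using $x^{(d-1)i}\le x^{2i}$, so the $i$-sum contributes a factor $\sum_i x^{2i}$-type decay rather than the crude quadratic count. This brings $F(1/5)$ comfortably below $19/20$. Checking this final numerical inequality is the step I expect to require care.
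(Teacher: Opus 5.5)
Your framework is exactly the paper's. The summand counts ordered sequences of nonzero triples weighted by $\prod_r w(t_r)$, so the full sum is $\sum_s F(x)^s=1/(1-F(x))$ with $F(x)=\sum w\,x^{w}$. Rankin's trick at $x=1/5$ then supplies the factor $25^{(d-1)e}$ and reduces the lemma to $F(1/5)\le 19/20$.

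The gap is that this last inequality, the only step with real content, is never established. Your concrete estimate gives about $1.006$ and fails. The repair is only gestured at, with the claim that it lands ``comfortably below $19/20$''. It does not. For $d=3$ the exact value is
\[
F\Bigl(\frac15\Bigr)=\frac{56/125}{\frac{64}{125}\left(\frac{24}{25}\right)^{2}}=\frac{35000}{36864}\approx 0.94944,
\]
so the margin to $19/20$ is about $6\cdot 10^{-4}$, and $1/(1-F)\approx 19.78$. Your failed bound lost only a parity correction: for $d=3$ and odd $n$ there are $(n+1)(n+3)/4$ triples of weight $n$, not $n^2/4+n+1$. That loss is $\frac14\sum_{n\text{ odd}}n5^{-n}\approx 0.056$, and it alone pushed the bound over $1$. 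So essentially no slack can be spent.

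Note also that replacing $x^{(d-1)i}$ by $x^{2i}$ only in the exponent does not work, because the prefactor $w$ still contains $(d-1)i$. The leftover term $(d-3)\sum i\,5^{-(2i+j+\nu)}\approx 0.068\,(d-3)$ already exceeds the margin at $d=4$.

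The fix is short, but it has to be exact.
\begin{itemize}
\item First reduce to $d=3$. The map $n\mapsto n5^{-n}$ is decreasing for $n\ge 1$, and $(d-1)i+j+\nu\ge 2i+j+\nu\ge 1$ for every nonzero triple. Hence $F_d(1/5)\le F_3(1/5)$.
\item Then evaluate exactly, using $F_3(x)=x\frac{d}{dx}\frac{1}{(1-x^2)(1-x)^2}$. This gives the value above, and $35000/36864<19/20$ because $700000<700416$.
\end{itemize}

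The paper instead evaluates the general closed form
\[
x\frac{d}{dx}\frac{1}{(1-x^{d-1})(1-x)^2}=\frac{2x+(d-1)x^{d-1}-(d+1)x^{d}}{(1-x^{d-1})^{2}(1-x)^{3}}
\]
at $x=1/5$, getting $\dfrac{2/5+(4d-6)5^{-d}}{\frac{64}{125}\left(1-5^{1-d}\right)^{2}}$. It then notes this is decreasing in $d$, hence at most the $d=3$ value. Either way, once that computation is supplied your argument is complete and coincides with the paper's.
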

\begin{proof}
    Consider the generating function \[B(x)=\sum_{i,j,\nu \ge 0, (i,j,\nu)\ne 0}((d-1)i+j+\nu)x^{(d-1)i+j+\nu}=x\frac d{dx}\frac{1}{(1-x^{d-1})(1-x)^2}.\] Then, $B(x)^r$ records an ordered list of $r$ triples. By forgetting the order and recording only how many times each triple occurs (namely $\mu(i,j,\nu)$), and then multiplying by the number of orderings again, we have \[\sum_\mu \frac{(\sum\mu(i,j,\nu))!}{\prod \mu(i,j,\nu)!}\prod((d-1)i+j+\nu)^{\mu(i,j,\nu)}x^{\sum((d-1)i+j+\nu)\mu(i,j,\nu)}=\sum_r B(x)^r = \frac 1 {1-B(x)}.\]
    At $x=1/5$, observe that we have \[x\frac d {dx}\frac{1}{(1-x^{d-1})(1-x)^2}=\frac{2x+(d-1)x^{d-1}-(d+1)x^d}{(1-x^{d-1})^2(1-x)^3}=\frac{\frac 25+\frac {4d-6}{5^d}}{\frac {64}{125}(1-5^{1-d})^2}.\] For $d\ge 3$, this is maximized at $d=3$, where the value is slightly under $0.95$.

    Write $1/(1-B(x)) = \sum_{k \ge 0}c_k x^k$ and observe that $c_k \ge 0$. Then, we have \[\sum_{k \ge 0} c_k 5^{-k}=\frac{1}{1-B(1/5)}<\frac{1}{1-0.95}=20.\] In particular, our desired sum is \[c_0+\cdots +c_{2(d-1)e}\le 5^{2(d-1)e}\sum_kc_k5^{-k}<20\cdot 25^{(d-1)e},\] as desired.
\end{proof}
\subsection{The case where the auxiliary section is zero}
When the auxiliary section $c$ is zero, the case where $2m<de+2$ was handled in Section \ref{sec:minor_arcs}, so it remains to address the case $2m=de+2$. 

Fix $\lambda(\nu)$ satisfying $\sum_\nu \nu\lambda(\nu)=m$. Then, over the space \[B_{\lambda}=\prod_{\lambda(\nu)>0}\Sym^{\lambda(\nu)}(\P^1),\] let $\mathcal{Z}=\sum_\nu \nu D_\nu$ be the universal divisor and $\pi$ be the projection. Again, restriction gives a proper map 
\[\P_{B_{\lambda}}((\pi_*(\O_{\mathcal Z}(de\infty))^\vee)\to \P(P_{de}^\vee).\] 
Let $W_{m,\lambda,0}$ be its scheme-theoretic image. For fixed $Z$, every primitive functional on $Z$ has the same $c=0$ locus $P_e(-D_d(Z))$, so we do not need to impose a further incidence condition.

If this locus has codimension at most $h$, then we have \[m\le (d-1)d_d(Z) \text{ and } \sum_{\nu\ge 1}\lambda(\nu)\le d_d(Z)\le h.\] 
The projective bundle $\P_{B_\lambda}((\pi_*(\O_{\mathcal Z}(de\infty))^\vee)$ above hence has dimension \[m+\sum_{\nu\ge 1}\lambda(\nu)-1\le dh-1.\]  

The same calculations of the previous subsection with the polarization $\O(1)\otimes \boxtimes_{\lambda(\nu)>0}\O(\nu)$ give \[\cdeg W_{m,\lambda,0}\le 4^{de} \frac{(\sum_{\nu \ge 1}\lambda(\nu))!}{\prod_{\lambda(\nu)>0}\lambda(\nu)!}\prod_{\lambda(\nu)}\nu^{\lambda(\nu)}.\]
Moreover, the expression on the right is obtained from the summand in Lemma
\ref{lem:sum-multiplicity-degrees} by setting
$\mu(0,0,\nu)=\lambda(\nu)$ and setting all other values of $\mu$ equal to zero---this is just a
combinatorial identification. Since the corresponding total
weight is
\[\sum_{\nu\ge1}\nu\lambda(\nu)=m=\frac{de+2}{2}\le 2(d-1)e,
\]
it follows that the sum of these expressions is also bounded by the same
generating function estimate in Lemma
\ref{lem:sum-multiplicity-degrees}.
\subsection{The total degree}
\begin{prop}\label{prop:totdeg}
    For $A_d = \frac{12}{5}d\cdot 100^d$, we have \[\sum_{m,R,\mu}\cdeg W_{m,R,\mu}+\sum_{m,\lambda}\cdeg W_{m,\lambda,0}\le A_d^e,\] where the first sum ranges over valid data where the auxiliary section $c$ is non-zero and the second sum ranges over the case $2m=de+2$ with $c=0$.
\end{prop}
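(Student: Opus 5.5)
We combine Proposition \ref{prop:firstcasecdeg} and Lemma \ref{lem:sum-multiplicity-degrees} for the first sum, and the analogous estimate of the previous subsection for the second sum, and then absorb the number of possible pairs $(m,R)$ into the constant. The first step is to fix $(m,R)$ and sum Proposition \ref{prop:firstcasecdeg} over all admissible $\mu$. Every admissible $\mu$ satisfies
\[\sum_{i,j,\nu}((d-1)i+j+\nu)\mu(i,j,\nu)=(d-2)e+2m-2-2(d-1)R\le 2(d-1)e,\]
using $2m\le de+2$. So Lemma \ref{lem:sum-multiplicity-degrees} applies, and enlarging the range of $\mu$ only increases the bound. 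This gives
\[\sum_{\mu}\cdeg W_{m,R,\mu}\le (2de+1)4^{de}\cdot 20\cdot 25^{(d-1)e}.\]

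The second step counts the pairs $(m,R)$. We have $e+2\le m\le \lfloor de/2\rfloor+1$, and $0\le R\le e$ with the extra constraint $(d-1)R\le m-e-2$. This gives at most $(de/2)(e+1)$ pairs, a polynomial in $e$ and $d$. For the $c=0$ endpoint term, $m=(de+2)/2$ is fixed. The previous subsection bounds $\cdeg W_{m,\lambda,0}$ by $4^{de}$ times a summand of Lemma \ref{lem:sum-multiplicity-degrees}, so the total over $\lambda$ is at most $4^{de}\cdot 20\cdot 25^{(d-1)e}$.

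Adding the two contributions, the total is at most
\[\bigl((2de+1)\tfrac{de}{2}(e+1)+1\bigr)\cdot 20\cdot 4^{de}\cdot 25^{(d-1)e}\le \bigl((2de+1)\tfrac{de}{2}(e+1)+1\bigr)\cdot 20\cdot 100^{de}\cdot 25^{-e}.\]
It remains to show this is at most $(\tfrac{12}{5}d)^e100^{de}$, that is,
\[20\bigl((2de+1)\tfrac{de}{2}(e+1)+1\bigr)\le (60d)^e\quad\text{for all } e\ge1,\ d\ge3.\]
This polynomial-versus-exponential comparison is the main obstacle, because small $e$ is tight.

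At $e=1$ the left side is $20(d(2d+1)+1)$, which is roughly $40d^2$, while the right side is $60d$. This fails for $d\ge 3$ (at $d=3$ it reads $440\le 180$). So the crude count of pairs $(m,R)$ is too lossy, and the case of small $e$ must be handled separately.

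For small $e$ I would try first to use $\dim V\le de$, i.e.\ take the whole projective space, or to sharpen the count. Concretely, the factor $(2de+1)$ from Lemma \ref{lem:Bezout} can be replaced by $\dim+1\le m+r$. The $4^{de}$ in Lemma \ref{lem:degofprojectivebundleparam} was coarse, since in fact it is $2^{de+r}$ with $r\le 2m-2-dR$. The weight in Lemma \ref{lem:sum-multiplicity-degrees} is exactly $(d-2)e+2m-2-2(d-1)R$, so summing over $m$ gives a geometric series in $25^{m}$ that is dominated by the top $m$ rather than multiplied by the number of $m$. Similarly, summing over $R$ gives a geometric factor $25^{-2(d-1)R}$, which removes the count over $R$ entirely. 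After these refinements, only a factor of order $de$ remains against $(12d/5)^e$ together with the slack in the $0.95<1$ estimate. I expect that balancing constants at $e=1,2$ is precisely the delicate part of the proof.
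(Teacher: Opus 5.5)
Your proposal does not prove the statement. After your second step you show that the resulting inequality already fails at $e=1$. The remedies in your last paragraph are only sketched (``I expect\ldots''), so no bound $\le A_d^e$ is actually established.

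The failure comes from the factor $(de/2)(e+1)$ counting pairs $(m,R)$, and that factor is spurious. For valid data one has $m=\sum_{i,j,\nu}\nu\mu(i,j,\nu)$ and $R=e-\sum_{i,j,\nu}i\mu(i,j,\nu)$, so $\mu$ determines $(m,R)$. Thus $(m,R,\mu)\mapsto\mu$ is injective. By the weight identity you wrote down, together with $2m\le de+2$, its image lies in the range $\sum((d-1)i+j+\nu)\mu(i,j,\nu)\le 2(d-1)e$ of Lemma \ref{lem:sum-multiplicity-degrees}. Summing Proposition \ref{prop:firstcasecdeg} over all triples therefore gives directly
\[\sum_{m,R,\mu}\cdeg W_{m,R,\mu}\le 20(2de+1)4^{de}25^{(d-1)e},\]
with no count of pairs at all.

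For the $c=0$ endpoint term, the identification $\mu(0,0,\nu)=\lambda(\nu)$ similarly gives the bound $20\cdot 4^{de}25^{(d-1)e}$. The total is then $20(2de+2)4^{de}25^{(d-1)e}=20(2de+2)\,25^{-e}\,100^{de}$. It remains only to check $20(2de+2)\le(60d)^e$, which follows from $20\le 20^e$ and $2de+2\le(3d)^e$. The latter holds at $e=1$ because $d\ge 2$, and by induction on $e$ afterwards. This is exactly the paper's argument, and nothing is delicate at small $e$.

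Of your proposed fixes, the geometric-series summation over $m$ and $R$ would work. It is, however, a roundabout way of using the fact that distinct $(m,R)$ give disjoint sets of $\mu$. The suggestion to fall back on the whole projective space is not relevant here. The statement bounds a specific sum of cumulative degrees of the $W$'s, not the size of a covering set.
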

\begin{proof}
    By Proposition \ref{prop:firstcasecdeg} and Lemma \ref{lem:sum-multiplicity-degrees}, we have the following upper bound for the first sum: \[\sum_{m,R,\mu}\cdeg W_{m,R,\mu}\le 20(2de+1)4^{de}25^{(d-1)e}.\] Similarly, for the second sum, we have \[\sum_{m,\lambda}\cdeg W_{m,\lambda,0}\le20\cdot 4^{de}25^{(d-1)e},\] so the combined sum is bounded above by \[20(2de+2)4^{de}25^{(d-1)e}.\] Since $2de+2\le (3d)^e$ and $20\le 20^e$, so this is at most $A_d^e$ with the choice of \[A_d = \frac{12}{5}d\cdot 100^d,\] as desired.
\end{proof}

Finally, we are able to prove the main proposition of this section.

\begin{proof}[Proof of Proposition \ref{prop:effective-cover}]
    For $h=e+1$, take $V_{e,h}=\P(P_{de}^\vee)$. Then, its dimension is $de\le d(e+1)$ with cumulative degree one, so we may assume $h\le e$ for the rest of this proof.

    For $a,c\ne 0$, consider every $(m,R,\mu)$ satisfying the degree conditions above and $R+b(\mu)\ge e-h$. For the case $c=0$, consider only the case $2m=de+2$, $\sum_\nu \lambda(\nu)=m$, and $\sum_\nu \lceil \frac \nu {d-1}\rceil \lambda(\nu)\le h$. Then, define
    \[V_{e,h} = \bigcup_{m,R,\mu}W_{m,R,\mu}\cup \bigcup_{m,\lambda}W_{m,\lambda,0}\]

    Coverage of all considered $\alpha$ follows directly from the construction.  Indeed, choose a minimum support $Z$ and a largest component of $\Sing_\alpha$.  If the auxiliary section is generically non-zero, the factorization $a=ug$ and  $c=u^{d-1}v$ places $[\alpha]$ in one of the sets $W_{m,R,\mu}$. If it is identically zero and was not included in the direct count, then $2m=de+2$, and the point $[\alpha_Z]\in\P(H^0(Z,\O_Z(de\infty))^\vee)$ defined by $\alpha$ lies in the projective bundle used to define one of the sets $W_{m,\lambda,0}$. Thus the union defining $V_{e,h}$ contains every required class $[\alpha]$.

    Regarding the dimension, in the non-zero case \eqref{eq:dh-bound} gives the upper bound $dh$, and in the zero case, \eqref{eq:dh-1bound} gives the upper bound $dh-1$. So $\dim V_{e,h}\le dh$. 

    Finally, Proposition \ref{prop:totdeg} addresses the claim about cumulative degree, and so the result follows.
\end{proof}
We can now prove the main proposition of the previous section.
\begin{proof}[Proof of Proposition \ref{prop:aggregate-count}] The previous section already established all but the last claim.

By Proposition \ref{prop:effective-cover} and \cite[Theorem 2.1]{LachaudRolland}, we have \[\#V_{e,h}(\F_q)\le A_d^e(1+q+\cdots +q^{dh}).\]
Multiplying by $q-1$ accounts for the discrepancy between a projective class $[\alpha]$ versus its non-zero representative $\alpha$. In other words, there are \[O_{d}(A_d^eq^{dh+1})\] ``remaining functionals'' with $h(\alpha)\le  h$.
    
\end{proof}
\section{Putting everything together}\label{sec:puttingeverythingtogether}
In this section, we prove Theorem \ref{thm:main} and Proposition \ref{prop:sharpness}.

\begin{proof}[Proof of Theorem \ref{thm:main}]
Let $A_d$ be the constant in Proposition \ref{prop:aggregate-count}. The assumed inequality for $q$ is precisely \[A_d\sqrt{(d-1)^nq^{d-n/2}}<1.\]
We now apply Proposition \ref{prop:analytic-reduction}. Specifically, take the first collection to be the minor arc functionals for which $c$ vanishes identically on a largest component for a minimum support $Z$
and $2\deg Z<de+2$---these are counted directly in Proposition \ref{prop:aggregate-count}. Take all remaining minor arcs as the second collection. Then, we get \eqref{eq:minor-arc-bound} for some $\theta > 0$.

Now, let $\delta = (n-d-1)/(2d)>0$ and decrease $\theta$, if necessary, so that $\theta < \delta$. Then, both the minor arc estimate and the excess tail error are both \[O_{q,d,n,\theta}(q^{e(n-d)+n-1}q^{-\theta e}).\] This establishes the error term in \eqref{eq:main-asymptotic}.

It remains to check that the product of local factors is bounded above
and below by positive constants depending only on $q,d,$ and $n$.
This will show that the error above is a power saving relative to the
main term, uniformly in $e$ and $f$.

We use the calculation in the proof of
\cite[Lemma 3.11]{SawinWaring}, retaining its sharper numerical form.
Its hypotheses $p\nmid d$, $n>d+1$, and $n\ge5$ all hold here.  If
$v\in|\P^1|$ is a closed point and $Q=q^{\deg v}$, write
$\ell_v(f)$ for its local factor; at $v=\infty$ this means
$\ell_{\infty,e}(f)$.  That calculation bounds the amount by which
$\ell_v(f)$ can fall below one by
\[\frac{Q-1}{1-Q^{d-n}}\left((d-1)^nQ^{-n/2}+\sum_{r=2}^dQ^{r-1-n}\right).\]
Our choice of $q$ implies $(d-1)^nq^{d-n/2}<1$.  Since $Q\ge q$ and
$d-n/2<0$, we also have $(d-1)^nQ^{d-n/2}<1$.  The first term in the
last display is therefore at most
\[\frac{Q^{1-d}}{1-Q^{d-n}},
\]
and the remaining terms are at most
\[\frac{Q^{d-n}}{(1-Q^{-1})(1-Q^{d-n})}.
\]
Moreover, $Q\ge5$, $Q^{1-d}\le Q^{-2}$, and
$Q^{d-n}\le Q^{-4}$.  The sum of the last two displays is therefore
less than $1/2$.  Thus every local factor is uniformly bounded away
from zero.  The estimate in the same proof,
\[\ell_v(f)=1+O_{d,n}\left(Q^{1-n/2}+Q^{d-n}\right),
\]
is summable over the closed points of $\P^1$.  It follows that
\[\left(\ell_{\infty,e}(f)\prod_{\pi\text{ monic irreducible}}\ell_\pi(f)\right)^{-1}=O_{q,d,n}(1),
\]
uniformly in $e$ and $f$.  Thus the error is also a power saving
relative to the main term.  This proves the theorem.\end{proof}

\begin{proof}[Proof of Proposition \ref{prop:sharpness}] For $n=2d$ or $2d-1$, we set $C_{n,e}(f)$ to denote the product of local factors in $n$ variables in the main term of \eqref{eq:main-asymptotic}. For a finite subscheme $Z\subset \P^1$ and for any $\beta\in H^0(Z,\O_Z(de\infty))^\vee$, let \[S_Z(\beta)=q^{-\deg Z}\sum_{b\in H^0(Z,\O_Z(e\infty))}\psi(\beta(b^d)).\]
Again, we call $\beta$ primitive if it does not factor through any proper subscheme of $Z$. For an integer $B$, let \[C_{n,e}^{\le B}(f)=\sum_{\substack{Z\subsetneq \P^1 \\ \deg Z\le B}}\sum_{\beta\text{ primitive}}S_Z(\beta)^n\psi(-\beta(f\rvert_Z)).\] 
Note that the full series (without the restriction on $B$) is $C_{n,e}(f)$ by \cite[Lemma 3.7]{SawinWaring} and \cite[Lemma 3.8]{SawinWaring}. 

We next claim that the contribution from subschemes $Z$ with $\deg Z > B$ tends to zero as $B\to\infty,$ uniformly in $e$ and $f$. At a closed point $v$, let $Q=q^{\deg v}$, and consider \[1+\sum_{m\ge1} \sum_{\substack{\beta\text{ primitive}\\\text{on }m[v]}} |S_{m[v]}(\beta)|^n.\]
There are at most $Q^m$ primitive functionals on $m[v]$. Writing $m=kd+j$ with $1\le j\le d$, the bounds of \cite[Lemma 3.6]{SawinWaring} and summing over $k$ show that the preceding expression is \[1+O_d(Q^{1-n/2}+Q^{d-n}).\]
By our assumption on $n$, both exponents are less than $-1$. Since there are $O(q^m)$ closed points of degree $m$, it follows that \[\sum_{Z\subsetneq \P^1}\sum_{\beta \text{ primitive}}|S_Z(\beta)|^n<\infty\] by using the multiplicativity of $S_Z$ to express the series as a product of the above local terms. Moreover, the bound is uniform in $e$ because we may choose compatible trivializations $\O(e\infty)|_Z\cong \O(de\infty)|_Z\cong \O_Z$, so that the numbers $S_Z(\beta)$ are independent of $e$. Since $\psi(-\beta(f\rvert_Z))$ has magnitude one, it follows, uniformly in $e$ and $f$ that 
\begin{equation}\label{eq:uniformbound}
    |C_{n,e}(f)-C_{n,e}^{\le B}(f)|\le \sum_{\deg Z>B}\sum_{\beta\text{ primitive}}|S_Z(\beta)^n|\to0.
\end{equation}
When $f=0$, $\psi(-\beta(f\rvert_Z))$ is moreover equal to one, so this sum and its truncated version are both independent of $e$; we can then denote them by $C_n(0)$ and $C_n^{\le B}(0)$.

Next, let us consider the case $n=2d$. By applying the substitution $a\mapsto \zeta a$, it's clear that $S(\alpha)$ is real. Then, by orthogonality, we have \[N_{e,2d}(0)=q^{-de-1}\sum_\alpha |S(\alpha)|^{2d}\] and \[q^{-de-1}\sum_\alpha |S(\alpha)|^2=\#{(a,b)\in P_e^2\colon a^d=b^d}\ge q^{e+1}.\]
For fixed $B$, there are finitely many subschemes of degree at most $B$, with each supporting at most $q^B$ functionals, which gives $O_{q,B}(1)$ functionals that have degree at most $B$. For sufficiently large $e$, there is a bijection between such functionals $\alpha$ and primitive functionals $\beta\in H^0(Z,\O(de\infty))^\vee$ with minimum support $Z$. In particular, we have $S(\alpha)=q^{e+1}S_Z(\beta)$, so for large $e$ we have \[q^{-de-1}\sum_{\deg \alpha\le B}|S(\alpha)|^{2d}=q^{de+2d-1}C_{2d}^{\le B}(0).\] On the other hand, the trivial bound $|S(\alpha)|\le q^{e+1}$ gives \[q^{-de-1}\sum_{\deg \alpha\le B}|S(\alpha)|^2=O_{q,B}(q^{2(e+1)-(de+1)})=o(q^{e+1}).\]
By H\"older's inequality, we obtain \[q^{-de-1}\sum_{\deg \alpha>B}|S(\alpha)|^{2d}\ge \left(q^{-de-1}\sum_{\deg \alpha>B}|S(\alpha)|^2\right)^d\ge(1-o(1))q^{d(e+1)}.\]
Then, by first taking the limit as $e\to \infty$ (noting that the $o(1)$ then becomes zero) and then taking the limit as $B\to \infty$, we obtain 
\begin{equation}\label{eq:difference}
    \liminf_{e\to \infty}(q^{-(de+2d-1)}N_{e,2d}(0)-C_{2d}(0))\ge q^{1-d}>0.
\end{equation}
Finally, we handle the case $n=2d-1$. Recall that for $\deg Z\le e+1$, the restriction map $P_e\to H^0(Z,\O_Z(e\infty))$ is surjective, so we have \[q^{-e-1}\sum_{a\in P_e}\psi(\beta((a\rvert_Z)^d))=S_Z(\beta).\] By definition, for $f=-a^d$, we have \[q^{-e-1}\sum_{a\in P_e}C_{2d-1,e}^{\le e+1}(-a^d)=C_{2d}^{\le e+1}(0),\] which after applying the triangle inequality and \eqref{eq:uniformbound} twice gives \[q^{-e-1}\sum_{a\in P_e}C_{2d-1,e}(-a^d)=C_{2d}(0)+o(1).\]
So, if $N_{e,2d-1}(f)=q^{e(d-1)+2d-2}(C_{2d-1,e}(f)+o(1))$ actually held uniformly in $f$, then summing over $a$ (setting $f=-a^d$) and using that $N_{e,2d}(0)=\sum_{a\in P_e}N_{e,2d-1}(-a^d)$ would give $q^{-(de+2d-1)}N_{e,2d}(0)=C_{2d}(0)+o(1),$ which contradicts \eqref{eq:difference}!
\end{proof}

\bibliographystyle{plain}
\bibliography{optimal_waring}

\end{document}